\documentclass[12pt, reqno]{amsart}
\usepackage{color}
\usepackage{hyperref}
\usepackage{graphicx}
\usepackage{amsmath}
\usepackage{amsthm}
\usepackage{amssymb,bbm}%
\usepackage[numbers, square]{natbib}
\usepackage{mathrsfs}
\usepackage{enumerate}
\usepackage{dsfont}
\usepackage{bbm}

\setlength{\oddsidemargin}{-0.0in} \setlength{\textwidth}{6.5in}
\setlength{\topmargin}{-0.0in} \setlength{\textheight}{8.4in} \evensidemargin
\oddsidemargin
\parindent=8mm

\numberwithin{equation}{section}

\newcommand{\Sc}{\mathcal{S}}
\newcommand{\Lc}{\mathcal{L}}


\newcommand{\E}{\mathbb E}
\newcommand{\Pc}{\mathcal P}
\newcommand{\vol}{\operatorname{Vol}}
\newcommand{\Ac}{\mathcal{A}}

\newcommand{\Crit}{\mathcal{C}}
\newcommand{\prob}{\mathcal{P}r}

\newcommand{\R}{\mathbb{R}}

\newcommand{\C}{\mathbb{C}}
\newcommand{\Z}{\mathbb{Z}}

\newcommand{\Hc}{\mathcal{H}}





\newcommand{\M}{\mathcal{M}}





\theoremstyle{plain}
\newtheorem{theorem}{Theorem}[section]
\newtheorem{proposition}[theorem]{Proposition}
\newtheorem{lemma}[theorem]{Lemma}

\theoremstyle{definition}
\newtheorem{definition}[theorem]{Definition}

\newcommand{\nod}{\mathcal{N}}

\newcommand{\Zc}{\mathcal{Z}}
\newcommand{\Dc}{\mathcal{D}}

\begin{document}

\title[Betti numbers]{On the expected Betti numbers of the nodal set of random fields}

\author{Igor Wigman}
\email{igor.wigman@kcl.ac.uk}
\address{Department of Mathematics, King's College London}

\date{\today}

\begin{abstract}
This note concerns the asymptotics of the expected total Betti numbers of the nodal set
for an important class of Gaussian ensembles of random fields on Riemannian manifolds.
By working with the limit random field defined on the Euclidean space we were able to obtain
a locally precise asymptotic result, though due to the possible positive
contribution of large {\em percolating} components this does not allow to infer a global result.
As a by-product of our analysis, we refine the lower bound of Gayet-Welschinger for the
important Kostlan ensemble of random polynomials and its generalisation to K\"{a}hler manifolds.
\end{abstract}
	
\maketitle

\section{Introduction}

\subsection{Betti numbers for random fields: Euclidean case}

Let $F:\R^{d}\rightarrow\R$ be a centred stationary Gaussian random field, $d\ge 2$. The {\em nodal set of $F$} is its (random) zero set $$\Zc_{F}:=F^{-1}(0)=\{x\in\R^{d}:\: F(x)=0\}\subseteq\R^{d};$$ assuming $F$ is sufficiently {\em smooth} and {\em non-degenerate} (or regular),
its connected components (``nodal components of $F$") are a.s. either closed $(d-1)$-manifolds or smooth infinite hypersurfaces (``percolating components").
One way to study the topology of $\Zc_{F}$, a central research thread in the recent few years,
is by restricting $F$ to a large centred
ball $B(R)=\{x\in\R^{d}:\: \|x\|<R \}$, and then investigate the restricted nodal set $\widetilde{\Zc_{F}}(R):=F^{-1}(0)\cap B(R)$ as $R\rightarrow\infty$.
The set $\widetilde{\Zc_{F}}(R)$ consists of the union of the a.s. smooth closed nodal components of $\Zc_{F}$
lying entirely in $B(R)$,
and the fractions of nodal components of $F$ intersecting $\partial B(R)$; note that, by intersecting
with $B(R)$, the components intersecting $\partial B(R)$, finite or percolating,
might break into $2$ or more connected components, or fail to be closed.

It follows as a by-product of the precise analysis due to Nazarov-Sodin ~\cite{sodin_lec_notes,nazarov_sodin} that, under very mild assumptions on $F$ to be discussed below, mainly concerning its smoothness and non-degeneracy, with high probability {\em most}
of the components of $\Zc_{F}$ fall into the former, rather than the latter,
category (see \eqref{eq:nod numb conv mean} below).
That is, for $R$ large, with high probability, most of the components of $\Zc_{F}$ intersecting $B(R)$ are lying entirely within $B(R)$.
Setting $$\Zc_{F}(R):=\bigcup\limits_{\gamma\subseteq B(R)}\gamma$$
to be the union of all the nodal components $\gamma$ of $F$ lying entirely in $B(R)$, the first primary concern
of this note is in the topology of $\Zc_{F}(R)$, and, in particular, the Betti numbers of
$\Zc_{F}(R)$ as $R\rightarrow\infty$, more precisely, the asymptotics of their expected values.

For $0\le i \le d-1$ the corresponding Betti number $b_{i}(\cdot )$ is the dimension of the $i$'th
homology group, so that a.s.
\begin{equation}
\label{eq:betai def}
\beta_{i}(R)=\beta_{F;i}(R):=b_{i}(\Zc_{F}(R)) = \sum\limits_{\gamma\subseteq \Zc_{F}(R)} b_{i}(\gamma),
\end{equation}
summation over all nodal components $\gamma$ lying in $\Zc_{F}(R)$. For example, $\beta_{0}=:\nod_{F}(R)$ is the
total number of connected components $\gamma\subseteq \Zc_{F}(R)$ (``nodal count") analysed by Nazarov-Sodin,
and $$\beta_{i}(R) = \beta_{d-1-i}(R)$$ by
Poincar\'{e} duality. To be able to state Nazarov-Sodin's results we need to introduce the following axioms;
by convention they are expressed in terms of the spectral measure rather than $F$ or its covariance function.

\begin{definition}[Axioms $(\rho 1)-(\rho 4)$ on $F$]
\label{def:axioms rho1-4}

Let $F:\R^{d}\rightarrow\R$ be a Gaussian stationary random field, $$r_{F}(x-y)=r_{F}(x,y):=\E[F(x)\cdot F(y)]$$
the covariance function of $F$, and $\rho=\rho_{F}$ be its spectral measure, i.e. the Fourier transform of $r_{F}$
on $\R^{d}$.

\begin{enumerate}

\item $F$ satisfies $(\rho 1)$ if the measure $\rho$ has no atoms.

\item $F$ satisfies $(\rho 2)$ if for some $p>6$, $$\int\limits_{\R^{d}}\|\lambda\|^{p}d\rho(\lambda)<\infty.$$

\item $F$ satisfies $(\rho 3)$ if the support of $\rho$ does not lie in a linear hyperplane of $\R^{d}$.

\item $F$ satisfies $(\rho 4)$ if the interior of the support of $\rho$ is non-empty.

\end{enumerate}

\end{definition}

Axioms $(\rho 1)$, $(\rho 2)$ and $(\rho 3)$ ensure that the action of translations on $\R^{d}$ is ergodic, a.s. sufficient smoothness
of $F$, and non-degeneracy of $F$ understood in proper sense, respectively. Axiom $(\rho 4)$ implies that any smooth function
belongs to the support of the law of $F$, which, in turn, will yield the positivity of the number of nodal components, and positive representation
of every topological type of nodal components. 

\vspace{2mm}

Recall that $\nod_{F}(R)=\beta_{0}(R)$ is the number of nodal components of $F$ entirely lying in $B(R)$, and let
$V_{d}$ be the volume of the unit $d$-ball, and $\vol B(R)=V_{d}\cdot R^{d}$ be the volume of the radius $R$ ball in $\R^{d}$. Nazarov and Sodin
~\cite{sodin_lec_notes,nazarov_sodin} proved
that if $F$ satisfies $(\rho 1)-(\rho 3)$, then there exists a constant $c_{NS}=c_{NS}(\rho_{F})$ (``Nazarov-Sodin constant") so that
$\frac{\nod_{F}(R)}{\vol B(R)}$ converges to $c_{NS}$, both in mean and a.s. That is, as $R\rightarrow\infty$,
\begin{equation}
\label{eq:nod numb conv mean}
\E\left[\left|\frac{\nod_{F}(R)}{\vol B(R)} - c_{NS} \right|\right] \rightarrow 0,
\end{equation}
so that, in particular,
\begin{equation}
\label{eq:exp nod numb asymp}
\E[\nod_{F}(R)] = c_{NS}\cdot \vol B(R)+o(R^{d}).
\end{equation}
They also showed that imposing $(\rho 4)$ is sufficient (but not necessary)
for the strict positivity of $c_{NS}$, and found other very mild sufficient
conditions on $\rho$, so that $c_{NS}>0$. The validity of the asymptotic \eqref{eq:exp nod numb asymp}
for the expected nodal count was extended ~\cite{kurlberg2018variation} to hold without imposing the ergodicity axiom $(\rho 1)$,
with $c_{NS}=c_{NS}(\rho_{F})$ appropriately generalised,
also establishing a stronger estimate for the error term as compared to the r.h.s. of \eqref{eq:exp nod numb asymp}.

One might think that endowing the ``larger" components with the same weight $1$ as the ``smaller" components might be ``discriminatory"
towards the larger ones, so that separating the counts based on the components' topology ~\cite{sarnak_wigman16} or geometry
~\cite{beliaev2018volume} would provide an adequate response for the alleged discrimination. These nevertheless do not address the
important question of the {\em total} Betti number $\beta_{i}$, the main difficulty being that the individual Betti number
$b_{i}(\gamma)$ of a nodal component $\gamma$ of $F$ is not bounded,
even under the assumption that $\gamma \subseteq B(R)$ is entirely lying inside a compact domain. Despite this, we will be
able to resolve this difficulty by controlling from above the total Betti number via Morse Theory ~\cite{milnor1963morse},
an approach already
pursued by Gayet-Welschinger ~\cite{gayet2016betti} (see \S\ref{sec:proofs outline} below for a more detailed explanation).

\begin{theorem}
\label{thm:Betti asymp Euclid}
Let $F:\R^{d}\rightarrow\R$ be a centred Gaussian random field, satisfying axioms $(\rho 2)$ and $(\rho 3)$ of 
Definition \ref{def:axioms rho1-4}, $d\ge 2$, and $0\le i \le d-1$. Then

\begin{enumerate}[a.]

\item
There exists a number $c_{i}=c_{F;i}\ge 0$ so that
\begin{equation}
\label{eq:exp Betti R^d}
\E[\beta_{i}(R)] = c_{i}\cdot\vol B(R) + o_{R\rightarrow\infty}(R^{d}).
\end{equation}

\item If, in addition, $F$ satisfies $(\rho 1)$, then convergence \eqref{eq:exp Betti R^d} could be extended to hold in mean, i.e.
\begin{equation}
\label{eq:conv Betti L1 R^d}
\E\left[\left| \frac{\beta_{i}(R)}{\vol B(R)} - c_{i}  \right|\right] \rightarrow 0
\end{equation}
as $R\rightarrow\infty$.

\item
\label{it:rho4=>cNS>0}
Further, if $F$ satisfies the axiom $(\rho 4)$ (in addition to $(\rho 2)$ and $(\rho 3)$, but not $(\rho 1)$),
then $c_{i}>0$. The same conclusion holds for the important Berry's monochromatic isotropic random waves in arbitrary dimensions
(``Berry's random wave model").

\end{enumerate}

\end{theorem}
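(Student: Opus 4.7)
My plan is to follow the blueprint of Nazarov-Sodin's nodal-count argument, with the key new ingredient being a Morse-theoretic reduction controlling the individual Betti numbers, in the spirit of Gayet-Welschinger \cite{gayet2016betti}. Fix a generic direction $e\in S^{d-1}$ and consider the linear function $\phi_{e}(x):=\langle x,e\rangle$; on a.s.\ every nodal component $\gamma$ of $F$ the restriction $\phi_{e}|_{\gamma}$ is Morse, so Morse inequalities give $b_{i}(\gamma)\le M_{i}(\phi_{e}|_{\gamma})$, with $M_{i}$ counting critical points of index $i$. Summing over components $\gamma\subseteq B(R)$,
$$
\beta_{i}(R) \;\le\; \#\bigl\{x\in\Zc_{F}\cap B(R): \nabla F(x)\parallel e\bigr\},
$$
and the right-hand side is a Kac-Rice functional whose expectation, under $(\rho 2)$ and $(\rho 3)$, is bounded by a constant multiple of $\vol B(R)$. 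Hence $\E[\beta_{i}(R)]=O(R^{d})$, and, more importantly, the local densities $\beta_{i}(F;\cdot)/\vol(\cdot)$ are uniformly integrable.

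For part (a) I would prove existence of $c_{i}$ by a sandwich argument: tile $B(R)$ by disjoint copies of a cube $Q$ of side $\ell$ and set $\nu_{i}(Q):=\sum_{\gamma\subseteq Q}b_{i}(\gamma)$. Any component $\gamma$ fitting in some cube $Q\subseteq B(R)$ automatically contributes to $\beta_{i}(R)$, giving $\sum_{Q\subseteq B(R)}\E[\nu_{i}(Q)]\le\E[\beta_{i}(R)]$. The reverse inequality, up to lower-order error, requires controlling the total $i$-th Betti number of the components that straddle cube walls; the Morse bound above reduces this to a Kac-Rice count of critical points of $\phi_{e}|_{\Zc_{F}}$ in a thin slab around the walls, whose contribution is $O(R^{d}/\ell)$. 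Sending $R\to\infty$ first and then $\ell\to\infty$, together with stationarity, yields convergence of $\E[\beta_{i}(R)]/\vol B(R)$ to some $c_{i}\ge 0$.

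Part (b) then follows by combining the uniform integrability from the first paragraph with a Wiener-type multidimensional ergodic theorem: under $(\rho 1)$ the translation action of $\R^{d}$ on the path space of $F$ is ergodic, so the spatial averages of the stationary local functional $x\mapsto\nu_{i}(Q+x)$ converge a.s., and uniform integrability upgrades this to $L^{1}$ convergence, which then descends to $\beta_{i}(R)/\vol B(R)$ via the sandwich of the previous paragraph. For (c), under $(\rho 4)$ the support of the law of $F|_{B}$ on any ball $B$ contains every smooth function, so with positive probability $F$ is $C^{2}$-close on $B$ to any chosen model. Picking a model whose zero set contains a compact component of a specified topological type with $b_{i}\ge 1$ (e.g.\ a round sphere for $i\in\{0,d-1\}$, and a standardly embedded $S^{i}\times S^{d-1-i}$ otherwise, realised as a regular level set of an explicit polynomial) and invoking stationarity yields a positive density of such components, whence $c_{i}>0$. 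For Berry's random wave model axiom $(\rho 4)$ fails, but the monochromatic barrier construction of Nazarov-Sodin \cite{nazarov_sodin} supplies spherical-harmonic models realising the required topological types, and the rest of the argument carries over.

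The main technical obstacle will be the quantitative control of the wall-crossing components in the tiling step, since their individual Betti numbers are a priori unbounded; only the Morse-theoretic reduction, combined with a careful Kac-Rice estimate in the thin neighbourhood of the walls, brings their contribution below the leading $\vol B(R)$ term. This same reduction is the source of the uniform integrability used throughout, and is therefore the linchpin of the entire argument.
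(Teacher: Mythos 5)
Your Morse-theoretic upper bound (with the height function $\phi_{e}$ in place of the paper's distance function $g_{p}=\|x-p\|^{2}$; both choices require a Sard--Fubini argument to produce a parameter for which the restriction to $F^{-1}(0)$ is a.s.\ Morse, and both then reduce to a bounded Kac--Rice density) and your treatment of part (c) are sound and close to the paper's. The genuine gap is in the upper half of your tiling sandwich. You claim that the total $i$-th Betti number of the components straddling the cube walls is controlled by a Kac--Rice count of critical points of $\phi_{e}|_{\Zc_{F}}$ \emph{in a thin slab around the walls}, hence is $O(R^{d}/\ell)$. This does not follow: a component $\gamma$ that merely touches a wall can extend throughout $B(R)$ (this is exactly the percolating-component phenomenon the paper discusses at length), and the Morse inequality bounds $b_{i}(\gamma)$ by the number of critical points of $\phi_{e}|_{\gamma}$ over \emph{all} of $\gamma$, not over $\gamma$ intersected with the slab. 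A single percolating straddler can carry a Betti number of order $R^{d}$, and nothing in your argument excludes this. The paper explicitly declines to use any upper-bound sandwich for precisely this reason. Instead it uses \emph{only} the lower-bound inequality $\frac{1}{\vol B(r)}\int_{B(R-r)}\beta_{i}(x;r)\,dx\le\beta_{i}(R)$ together with the global Kac--Rice bound, via a Fekete-type trick: set $\eta=\limsup_{R}\E[\beta_{i}(R)]/R^{d}<\infty$, choose a ``good'' radius $r$ with $\E[\beta_{i}(r)]/r^{d}>\eta-\epsilon$, and deduce from the lower bound and stationarity that $\liminf_{R}\E[\beta_{i}(R)]/R^{d}\ge\eta-\epsilon$; no control of boundary-crossing components is ever needed. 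Your lower bound plus your global upper bound already contain everything required for this argument, so part (a) is rescuable, but not along the route you describe.

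The same issue propagates to part (b). The first-moment Kac--Rice bound gives $L^{1}$-boundedness of $\beta_{i}(R)/\vol B(R)$, not uniform integrability; establishing the latter would require a second-moment (variance) estimate that you do not have, and the paper explicitly flags this as the obstruction to $L^{1}$ convergence in the manifold setting. In the Euclidean case the paper sidesteps uniform integrability entirely by exploiting the one-sided domination $\Psi_{i}(R,r)\le\beta_{i}(R)$, where $\Psi_{i}(R,r)$ is the lower-sandwich average: since the difference is nonnegative, $\E\left[\left|\beta_{i}(R)-\Psi_{i}(R,r)\right|\right]=\E[\beta_{i}(R)]-\E[\Psi_{i}(R,r)]$, and both expectations are pinned near $c_{i}\vol B(R)$ by part (a), while the ergodic theorem under $(\rho 1)$ handles the $L^{1}$ convergence of $\Psi_{i}(R,r)$ itself. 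You should replace your appeal to uniform integrability and to the two-sided sandwich with this observation.
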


\subsection{Motivation and background}

The Betti numbers of both the nodal and the excursion sets of Gaussian random fields
serve as their important topological descriptor, and are therefore addressed in both mathematics and experimental
physics literature, in particular cosmology ~\cite{park2013betti}.
From the complex geometry perspective Gayet and Welschinger ~\cite{gayet2016betti} studied the distribution of the total Betti numbers of the zero set for the Kostlan
Gaussian ensemble of degree $n$ random homogeneous polynomials
on the $d$-dimensional projective space, and their generalisation to K\"{a}hler manifolds,
$n\rightarrow\infty$. In the projective coordinates $x=[x_{0}:\ldots : x_{d}]\in \R \Pc^{d}$ we may write
\begin{equation}
\label{eq:Pn Kostlan def}
P_{n}(x) = \sum\limits_{|j|=n}\sqrt{{n \choose j}} a_{j} x^{j},
\end{equation}
where $j=(j_{0},\ldots,j_{d})$, $|j|=\sum\limits_{i=0}^{d}j_{i}$, $x^{j}=x_{0}^{j_{0}}\cdot \ldots \cdot x_{d}^{j_{d}}$,
$ {n \choose j} = \frac{n!}{j_{0}!\cdot \ldots\cdot j_{d}!}$, and $\{a_{j}\}$ are standard Gaussian i.i.d.
By the homogeneity of $P_{n}$, its zero set makes sense on the projective space. The Kostlan (also referred to as ``Shub-Smale")
ensemble is an important model of random polynomials, uniquely invariant w.r.t. unitary transformations on $\C \Pc^{d}$.
Restricted to the unit sphere
$\Sc^{d}\subseteq \R^{d+1}$, the random fields $P_{n}$ are defined by the covariance function
\begin{equation*}
\E[P_{n}(x)\cdot P_{n}(y)] = \langle x,y\rangle^{n} = \left(\cos(\theta(x,y))\right)^{n},
\end{equation*}
where $x,y\in \Sc^{d}$, the inner product $\langle \cdot,\cdot\rangle$ is inherited from $\R^{d+1}$, and
$\theta(\cdot,\cdot)$ is the angle between two points on $\R^{d+1}$.

Upon scaling by $\sqrt{n}$ (the meaning is explained in Definition \ref{def:loc lim} below),
the Kostlan polynomials \eqref{eq:Pn Kostlan def} admit
~\cite[\S 2.5.4]{sodin_lec_notes}, locally uniformly, a (stationary isotropic) limit random field on $\R^{d}$, namely
the Bargmann-Fock ensemble defined by the ``Gaussian" covariance kernel
\begin{equation}
\label{eq:kappa Gauss BF}
\kappa(x):=e^{-x^{2}/2},
\end{equation}
see also ~\cite{beffara2017percolation,beliaev2017russo}. This indicates that
one should expect the Betti numbers to be of order of magnitude $\approx n^{d/2}$.
That this is so is supported by Gayet-Welschinger's upper bounds ~\cite{gayet2016betti}
\begin{equation*}
\E[b_{i}(P_{n}^{-1}(0))] \le A_{i} n^{d/2}
\end{equation*}
with some semi-explicit $A_{i}>0$, and the subsequent lower bounds ~\cite{gayet2014lower}
\begin{equation}
\label{eq:GW lower bound}
\E[b_{i}(P_{n}^{-1}(0))] \ge a_{i} n^{d/2},
\end{equation}
$a_{i}>0$, but to our best knowledge the important question of the true asymptotic behaviour of $b_{i}(P_{n}^{-1}(0))$
is still open.

\subsection{Betti numbers for Gaussian ensembles on Riemannian manifolds}

Since
$\kappa$ of \eqref{eq:kappa Gauss BF} (or, rather, its Fourier transform) easily satisfies all Nazarov-Sodin's axioms $(\rho 1)-(\rho 4)$
of Definition \ref{def:axioms rho1-4}, one wishes to invoke Theorem \ref{thm:Betti asymp Euclid}
with the Bargmann-Fock field in place of $F$, and try to deduce the results analogous to \eqref{eq:exp Betti R^d}
for the Betti numbers of the nodal set of $P_{n}$ in \eqref{eq:Pn Kostlan def}. This is precisely the purpose
of Theorem \ref{thm:tot Betti numb loc} below, valid in a scenario of {\em local translation invariant limits}, far
more general than merely the Kostlan ensemble, whose introduction is our next goal.

\vspace{2mm}

Let $\M$ be a compact Riemannian $d$-manifold, and $\{f_{L}\}_{L\in\Lc}$ be a family of {\em smooth} Gaussian random fields $f_{L}:\M\rightarrow\R$,
where the index $L$ attains a {\em discrete} set $\Lc$, and $K_{L}(\cdot,\cdot)$ the covariance function corresponding to $f_{L}$,
so that $$K_{L}(x,y)=\E[f_{L}(x)\cdot f_{L}(y)];$$ the parameter $L$ should be thought of as the scaling factor, generalising
the rolse of $\sqrt{n}$ for the Kostlan ensemble. We scale $f_{L}$ restricted to a sufficiently small neighbourhood of
a point $x\in \M$, so that the exponential map $\exp_{x}(\cdot):T_{x}\M\rightarrow\M$ is well defined.
We define
\begin{equation}
\label{eq:fx,L scal def}
f_{x,L}(u):= f_{L}(\exp_{x}(u/L),
\end{equation}
with covariance
$$K_{x,L}(u,v) := K_{L}(\exp_{x}(u/L),\exp_{x}(v/L))$$ with $|u|,|v|<L\cdot r$ with $r$ sufficiently small, uniformly
with $x\in\M$, allowing $u,v$ to grow with $L\rightarrow\infty$.

\begin{definition}[Local translation invariant limits, cf. ~{\cite[Definition 2 on p. 6]{nazarov_sodin}}]
\label{def:loc lim}

We say that the Gaussian ensemble $\{f_{L}\}_{L\in\Lc}$ possesses local translation invariant limits, if for almost all
$x\in \M$ there exists a positive definite function $K_{x}:\R^{d}\rightarrow\R$, so that for all $R>0$,
\begin{equation}
\label{eq:covar scal lim}
\lim\limits_{L\rightarrow\infty}\sup\limits_{|u|,|v|\le R}\left| K_{x,L}(u,v)-K_{x}(u-v)\right| \rightarrow 0.
\end{equation}
\end{definition}

Important examples of Gaussian ensembles possessing
translation invariant local limits include (but not limited to) Kostlan's ensemble
\eqref{eq:Pn Kostlan def}
of random homogeneous polynomials, and Gaussian band-limited functions ~\cite{sarnak_wigman16}, i.e. Gaussian superpositions of Laplace
eigenfunctions corresponding to eigenvalues lying in an energy window. For manifolds with spectral degeneracy, such as
the sphere and the torus (and $d$-cube with boundary), the {\em monochromatic} random waves (i.e. Gaussian superpositions of Laplace
eigenfunctions belonging to the same eigenspace) are a particular case of band-limited functions; two of the most interesting cases
are those of random spherical harmonics (random Laplace eigenfunctions on the round unit $d$-sphere)
~\cite{wigman2009distribution,wigman2010fluctuations}, and ``Arithmetic Random Waves"
(random Laplace eigenfunctions on the standard $d$-torus) ~\cite{oravecz2008leray,krishnapur2013nodal}.

In all the said examples of Gaussian ensembles on manifolds of our particular interest the scaling limit $K_{x}$ (and
the associate Gaussian random field on $\R^{d}$) was independent of $x$, and the limit in \eqref{eq:covar scal lim} is uniform, attained
in a strong quantitative form, see the discussion in ~\cite[\S 2.1]{beliaev2019mean}. We will also need the following,
more technical concepts of uniform smoothness and non-degeneracy for $\{f_{L}\}$, introduced in
~{\cite[definitions 2-3, p. 14-15]{sodin_lec_notes}}.

\begin{definition}[Smoothness and non-degeneracy]\ \\

\begin{enumerate}

\item We say that $\{f_{L}\}$ is $C^{3-}$ smooth if
for every $0<R<\infty$,
\begin{equation*}
\limsup\limits_{L\rightarrow\infty}\sup\left\{ |\partial_{u}^{i}\partial^{j}_{v} K_{x,L}(u,v)|:\: |i|,|j|\le 3;\;x\in \M, \|u\|,\|v\|\le R  \right\}< \infty.
\end{equation*}

\item We say that $\{f_{L}\}$ is non-degenerate if for every $0<R<\infty$
\begin{equation*}
\liminf\limits_{L\rightarrow\infty}\inf\left\{ \E\left[\partial_{\xi}f_{x,L}(u)^{2}\right]:\:  \xi\in\Sc^{d-1},\, x\in \M,\,\|u\|\le R  \right\}>0.
\end{equation*}

\end{enumerate}

\end{definition}

Let $\{f_{L}\}_{L\in\Lc}$ be a $C^{3-}$ smooth, non-degenerate,
Gaussian ensemble possessing translation invariant local limits $K_{x}$, corresponding
to Gaussian random fields on $R^{d}$ with spectral measure $\rho_{x}$, satisfying axioms
$(\rho 1)-(\rho 3)$. Denote $\nod(f_{L};x,R/L)$ to be the
number of nodal components of $f_{L}$ lying entirely in the geodesic ball $B_{x}(R/L)\subseteq\M$,
and $\nod(f_{L})$ to be the {\em total} number of the nodal components of $f_{L}$ on $\M$.
In this settings Nazarov-Sodin ~\cite{sodin_lec_notes,nazarov_sodin} proved that
\begin{equation}
\label{eq:nod comp loc Riemann}
\lim\limits_{R\rightarrow\infty}\limsup\limits_{L\rightarrow\infty}
\E\left[ \left| \frac{\nod(f_{L};x,R/L)}{\vol B(R)} -  c_{NS}(\rho_{x})    \right| \right] = 0,
\end{equation}
with $c_{NS}(\cdot)$ same as in \eqref{eq:nod numb conv mean}.

For the total number $\nod(f_{L})$ they glued the local results \eqref{eq:nod comp loc Riemann}, to deduce,
on invoking a two-parameter analogue of Egorov's Theorem yielding the {\em almost uniform} convergence of \eqref{eq:nod comp loc Riemann}
w.r.t. $x$, that
\begin{equation}
\label{eq:nod comp glob Riemann}
\lim\limits_{R\rightarrow\infty}
\E\left[ \left| \frac{\nod(f_{L})}{V_{d}L^{d}} -  \nu    \right| \right] \rightarrow 0,
\end{equation}
holds with $$\nu:=\int\limits_{\M}c_{NS}(\rho_{x})dx.$$
In particular, \eqref{eq:nod comp glob Riemann} yields
\begin{equation}
\label{eq:exp nod comp glob}
\E[\nod(f_{L})] = V_{d}\nu \cdot L^{d} + o(L^{d}),
\end{equation}

\vspace{2mm}

As it was mentioned above, in practice, in many applications, the scaling limit $K_{x}(\cdot)\equiv K(\cdot)$ does not depend on
$x$, so that, assuming w.l.o.g. that $\vol(\M)=1$, the asymptotic constant $\nu$ in \eqref{eq:nod comp glob Riemann}
(and \eqref{eq:exp nod comp glob}) is $\nu=c_{NS}(\rho)$, where $\rho$ is the Fourier transform of $K$.
In this situation, in accordance with Theorem \ref{thm:Betti asymp Euclid}\ref{it:rho4=>cNS>0}, $\nu=c_{NS}>0$ is positive, if
$(\rho 4)$ is satisfied.
The following result extends \eqref{eq:nod comp loc Riemann} to arbitrary Betti numbers.

\begin{theorem}
\label{thm:tot Betti numb loc}

Let $\{f_{L}\}_{L\in\Lc}$ be a $C^{3-}$ smooth, non-degenerate,
Gaussian ensemble, $x\in \M$ satisfying \eqref{eq:covar scal lim} with some $K_{x}$
satisfying axioms $(\rho 1)-(\rho 3)$, and $0 \le i \le d-1$.
Denote $\beta_{i;L}(x,R/L)=\beta_{i}(f_{L};x,R/L)$ to be the total $i$'th Betti
number of the union of all components of $f_{L}^{-1}(0)$ entirely contained
in the geodesic ball $B_{x}(R/L)$. Then for every $\epsilon>0$
\begin{equation}
\label{eq:betti loc prob conv ci}
\lim\limits_{R\rightarrow\infty}\limsup\limits_{L\rightarrow\infty}
\prob\left\{ \left| \frac{\beta_{i;L}(x,R/L)}{\vol B(R)}- c_{i} \right| > \epsilon \right\} = 0.
\end{equation}
where $c_{i}$ is the same as in \eqref{eq:conv Betti L1 R^d}, corresponding to the random field defined
by $K_{x}$.
\end{theorem}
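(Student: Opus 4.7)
My plan is to rescale the assertion to $\R^{d}$ so that Theorem \ref{thm:Betti asymp Euclid} can be invoked on the Euclidean limit field. For $L$ large enough that $R/L$ lies within the injectivity radius at $x$, the exponential map $\exp_{x}(\cdot/L)\colon B(R)\to B_{x}(R/L)$ is a smooth diffeomorphism, whence
$$\beta_{i;L}(x,R/L) = \beta_{i}(f_{x,L};R),$$
where $\beta_{i}(g;R)$ denotes the $i$'th Betti number of the union of connected components of $g^{-1}(0)$ lying entirely inside $B(R)\subset\R^{d}$. The proof then proceeds in three steps: (i) establish weak convergence of the rescaled fields $f_{x,L}$ to the Euclidean limit field $F$ (with covariance $K_{x}$) in $C^{2}(\overline{B(R)})$; (ii) prove that the functional $g\mapsto\beta_{i}(g;R)$ is continuous at $F$ almost surely; (iii) combine with part (b) of Theorem \ref{thm:Betti asymp Euclid} applied to $F$ via the portmanteau theorem.

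For step (i), the $C^{3-}$ smoothness hypothesis on $\{f_{L}\}$ gives uniform bounds on the mixed derivatives of $K_{x,L}(u,v)$ of order up to three in each variable, from which standard interpolation and equicontinuity arguments upgrade the $C^{0}$ convergence in \eqref{eq:covar scal lim} to locally uniform convergence of $K_{x,L}$ and its mixed derivatives up to order two towards the corresponding derivatives of $K_{x}$. Tightness of $\{f_{x,L}\}$ in $C^{2}(\overline{B(R)})$ follows from standard Gaussian entropy estimates, and any weak subsequential limit must have covariance $K_{x}$; hence $f_{x,L}\Rightarrow F$ in $C^{2}(\overline{B(R)})$, and by Skorokhod's representation we may realise this convergence almost surely on a common probability space.

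For step (ii), by Bulinskaya's lemma together with the non-degeneracy implied by axioms $(\rho 2),(\rho 3)$ for $K_{x}$, with probability one the field $F$ has no critical zeros inside $\overline{B(R)}$ and its nodal set meets $\partial B(R)$ transversally. On this full-measure event the nodal components of $F$ lying entirely inside $B(R)$ are separated from $\partial B(R)$ by a positive distance, while the components meeting $\partial B(R)$ cross it transversally; an application of the implicit function theorem along the linear homotopy $(1-t)F+tg$ then shows that sufficiently small $C^{2}$ perturbations $g$ of $F$ preserve both the partition of components into ``inside'' and ``boundary-meeting'' ones, and the diffeomorphism type of each component. Consequently $\beta_{i}(g;R)=\beta_{i}(F;R)$ for such $g$, yielding the claimed almost-sure continuity.

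Combining (i) and (ii) gives $\beta_{i}(f_{x,L};R)\to\beta_{i}(F;R)$ a.s., hence in distribution, and the portmanteau theorem applied to the closed event $\{|y-c_{i}|\ge\varepsilon\}$ yields
\begin{equation*}
\limsup_{L\to\infty}\prob\left\{\left|\frac{\beta_{i}(f_{x,L};R)}{\vol B(R)}-c_{i}\right|>\varepsilon\right\} \le \prob\left\{\left|\frac{\beta_{i}(F;R)}{\vol B(R)}-c_{i}\right|\ge\varepsilon\right\}.
\end{equation*}
Since $K_{x}$ satisfies $(\rho 1)$--$(\rho 3)$, part (b) of Theorem \ref{thm:Betti asymp Euclid} applies to $F$: the ratio $\beta_{i}(F;R)/\vol B(R)$ converges to $c_{i}$ in $L^{1}$ and a fortiori in probability, so the right-hand side tends to zero as $R\to\infty$, establishing \eqref{eq:betti loc prob conv ci}. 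I expect the principal obstacle to lie in step (ii): ensuring genuine continuity, rather than mere semicontinuity, of the Betti-number functional by combining an interior transversality statement (to rule out critical zeros) with a boundary transversality statement (to prevent components from drifting across $\partial B(R)$ under perturbation), both of which must hold simultaneously on a set of full probability.
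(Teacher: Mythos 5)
Your proposal is correct, and its skeleton coincides with the paper's: rescale by the exponential map so that $\beta_{i;L}(x,R/L)=\beta_{f_{x,L};i}(R)$, realise the convergence $f_{x,L}\to F_{x}$ on a common probability space, use stability of the nodal set under small $C^{1}$ perturbations away from an exceptional event, and feed the result into part b of Theorem \ref{thm:Betti asymp Euclid}. The packaging differs in two respects. First, the paper uses the explicit Nazarov--Sodin coupling (Lemma \ref{lem:Deltai small}) together with quantitative bad events $\Delta_{1},\Delta_{4}$ of total probability $<\delta$, whereas you invoke Skorokhod representation and the portmanteau theorem; both are legitimate, though note that the cited $C^{3-}$ machinery delivers $C^{1}$-closeness directly, which already suffices for the stability argument, so your insistence on $C^{2}$ convergence is unnecessary and is the most fragile part of your step (i). Second, and more substantively, the paper only establishes the two-sided sandwich $\beta_{F_{x};i}(R-1)\le\beta_{f_{x,L};i}(R)\le\beta_{F_{x};i}(R+1)$ (Proposition \ref{prop:perturb Betti}), deliberately shifting the radius by $\pm 1$ so that Lemma \ref{lem:func perturb comp} applies without ever examining what happens at $\partial B(R)$; you instead claim exact equality $\beta_{i}(g;R)=\beta_{i}(F;R)$ at the same radius, which obliges you to prove the almost-sure transversality of $F^{-1}(0)$ to $\partial B(R)$ and the positive-distance separation of the interior components from the boundary. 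You correctly single this out as the principal obstacle, and your sketch of it is sound (tangency to the sphere is a probability-zero event by a Bulinskaya-type argument, and only finitely many components meet $\overline{B(R+1)}$), but it is genuinely extra work that the paper's $R\mp 1$ device avoids at no asymptotic cost, since $\vol B(R\pm 1)/\vol B(R)\to 1$.
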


Theorem \ref{thm:tot Betti numb loc} asserts that the random variables $\left\{\frac{\beta_{i;L}(x,R/L)}{\vol B(R)}\right\}_{L\in\Lc}$
converge in probability to $c_{i}$, in the double limit $L\rightarrow\infty$, and then $R\rightarrow\infty$.
One would be tempted to try to deduce the convergence in mean for
the same setting, the main obstacle being that $\beta_{i;L}(x,R/L)$ is not bounded, and, in principle, a small probability
event might contribute positively to the expectation of $\beta_{i;L}(x,R/L)$. While it is plausible (if not likely) that
a handy bound on the variance (or the second moment), such as ~\cite{estrade2016number,muirhead2019second},
for the critical points number would rule this out and establish the desired $L^{1}$-convergence in this, or, perhaps, slightly more restrictive scenario, we will not pursue this direction in the present manuscript, for the sake of
keeping it compact.

\vspace{2mm}

Theorem \ref{thm:tot Betti numb loc} applied on the Kostlan ensemble \eqref{eq:Pn Kostlan def} of
random polynomials, in particular, recovers Gayet-Welschinger's later lower bound \eqref{eq:GW lower bound},
but, finer, with high probability, it prescribes the asymptotics of the total Betti numbers of
all the components lying in geodesic balls of radius slightly above $1/\sqrt{n}$, and hence, in this case, one might
think of Theorem \ref{thm:tot Betti numb loc} as a refinement of \eqref{eq:GW lower bound}.
It would be desirable to determine the true asymptotic law of
$\E[b_{i}(P_{n}^{-1}(0))]$ (hopefully, for the more general scenario),
though the possibility of giant (``percolating") components is a genuine consideration,
and, if our present understanding of this subtlety is correct ~\cite{beliaev2019mean},
then, to resolve the asymptotics of $\E[b_{i}(P_{n}^{-1}(0))]$
the question whether they consume a positive proportion of the total Betti numbers cannot be possibly avoided.
In fact, it is likely that for $d\ge 3$, with high probability, there exists a single percolating
component consuming a high proportion of the space, and contributing positively to the Betti numbers, as
found numerically by Barnett-Jin (presented within ~\cite{sarnak_wigman16}), and explained by P. Sarnak
~\cite{Sa}, with the use of percolating vs. non-percolating random fields (see ~\cite[\S 1.2]{beliaev2019mean} for more details,
and also the discussion in \S\ref{sec:proofs outline} below).

\subsection{Acknowledgement}

It is a pleasure to thank P. Sarnak and M. Sodin for their comments on the presented proofs
of the main results, D. Panov for freely sharing his expertise on various aspects of the presented material,
Z. Rudnick for his support and encouragement regarding this work and his valuable comments on an earlier version
of this manuscript, and Z. Kabluchko for pointing out
the superadditive ergodic theorem ~\cite[Theorem 2.14, page 210]{krengel_book}. The author of this manuscript
is grateful to D. Beliaev and S. Muirhead for many stimulating conversations concerning subjects of high relelvance to
the presented research.
The research leading to these results has received funding from the European Research Council under
the European Union's Seventh Framework Programme (FP7/2007-2013), ERC grant agreement n$^{\text{o}}$ 335141.

\vspace{-1mm}

\section{Outline of the proofs and discussion}
\label{sec:proofs outline}

\vspace{-1mm}

\subsection{Outline of the proofs of the principle results}

The principal novel result of this manuscript is Theorem \ref{thm:Betti asymp Euclid}.
Theorem \ref{thm:Betti asymp Euclid} {\em given}, the proof of Theorem \ref{thm:tot Betti numb loc} does not differ significantly from the proof of ~\cite[Theorem 5]{sodin_lec_notes} given ~\cite[Theorem 1]{sodin_lec_notes}. The key observation here is that while passing from the Euclidean random field $F_{x}$ to its perturbed Riemannian version $f_{x,L}$ in the vicinity of $x\in\M$, the topology of its nodal set
is preserved on a high probability {\em stable} event, to be constructed, and hence so is its $i$'th Betti number.
In fact, this was the conclusion from the argument presented in ~\cite[Theorem 6.2]{sarnak_wigman16}
that will reconstructed in \S\ref{sec:proof loc Riem}, alas briefly, for the sake of completeness.

\begin{figure}[ht]
\centering
\includegraphics[height=70mm]{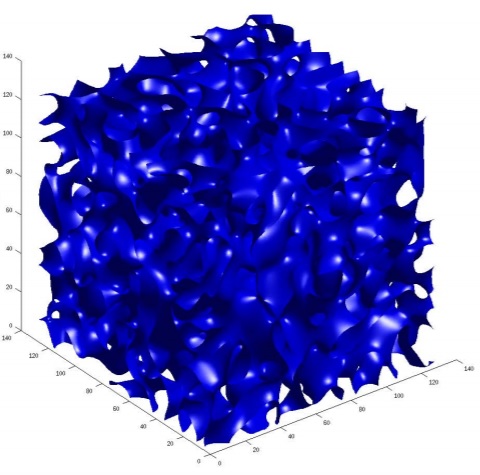}
\includegraphics[height=70mm]{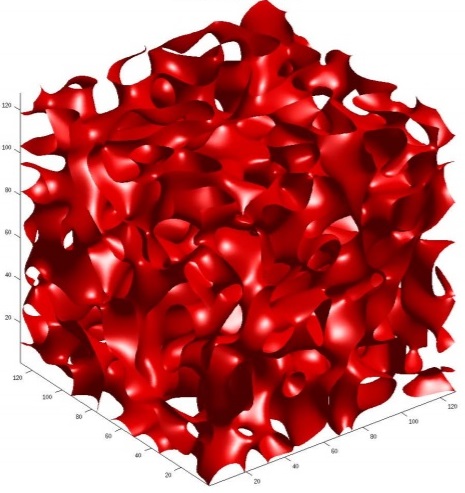}
\caption{Computer simulations by A. Barnett.
Left: Giant percolating nodal components for $3$-dimensional monochromatic isotropic waves.
Right: Analogous picture for the ``Real Fubini-Study" (a random ensemble of homogeneous polynomials, with
different law as compared to Kostlan's ensemble).}
\label{fig:Barnett 3d giant}
\end{figure}

To address the asymptotic expected nodal count $\nod_{F}(R) = \beta_{F;0}(R)$, Nazarov-Sodin have developed the so-called {\em Integral Geometric sandwich}. The idea is that one bounds, $\nod_{F}(R)$ from below using $\nod_{\cdot}(r)$, of radii $0<r<R$ much smaller than $R$ (``fixed"),
and $F$ translated (equivalently, shifter radius-$r$ ball), and from above using a version of $\nod_{F}(r)$, where, rather than counting
nodal components lying entirely in $B(r)$ (or its shift), we also include those components intersecting its boundary $\partial B(r)$.
By invoking ergodic methods one shows that both these bounds converge to the same limit, and in its turn this yields automatically both the asymptotics for the expected nodal count, and the convergence in mean.

Unfortunately, since we endow each nodal component $\gamma$ with the, possibly unbounded,
weight $b_{i}(\gamma)$, the upper bound in the sandwich does not seemingly yield a useful result. We bypass this major obstacle by using
a global bound on the expected Betti numbers via Morse Theory (and the Kac-Rice method), and then establishing an asymptotics
for the expected number. Rather than working with arbitrary chosen ``fixed" radii $r>0$, we only work with ``good" radii, defined
so that these numbers are ``almost maximising" the expected Betti numbers, so that we can infer the same for all the sufficiently
big radii $R>r$ (see \eqref{eq:eta def limsup} and \eqref{eq:betai/r^d>eta-eps}). In hindsight, we interpret working with the good radii
as ``miraculously" eliminating the possible fluctuations in the contribution to the Betti numbers of the giant percolating domains.
Once the asymptotics for the expected Betti number has been determined, we tour de force working with the good radii
to also yield the convergence in mean, with the help of the ergodic assumption $(\rho 1)$.

\vspace{2mm}

Another possible strategy for proving results like Theorem \ref{thm:Betti asymp Euclid} is by
observing that, by naturally extending the definition of $\beta_{i}$ to smooth domains $\Dc\subseteq\R^{d}$ as
\begin{equation*}
\beta_{i}(\Dc)=\beta_{i;F}(\Dc):= \sum\limits_{\gamma\subseteq \Dc} b_{i}(\gamma),
\end{equation*}
with summation over the (random) nodal components of $F$ lying in $\Dc$,
$\beta_{i}(\cdot)$ is made into a {\em super-additive random variable}, i.e. for all $\Dc_{1},\ldots,\Dc_{k}\subseteq \R^{d}$ pairwise disjoint
domains, the inequality
\begin{equation*}
\beta_{i}\left(\bigcup\limits_{j=1}^{k}\Dc_{j}\right)\ge \sum\limits_{j=1}^{k} \beta_{i}(\Dc_{j})
\end{equation*}
holds. It then might be tempting to apply the superadditive ergodic theorem ~\cite[Theorem 2.14, page 210]{krengel_book} (and its finer
version ~\cite[p.~165]{nguyen}) on $\beta_{i}$. However, in this manuscript we will present a direct and explicit
treatise of this subject.

\subsection{Discussion}

As it was mentioned above, a straightforward application of \ref{thm:tot Betti numb loc} on the Kostlan's ensemble of random homogenous
polynomials, in particular implies the lower bound \eqref{eq:GW lower bound} for the total expected Betti number for
this ensemble due to Gayet-Welschinger,
and its generalisations for K\"{a}hler manifolds. Our argument is entirely different as compared to Gayet-Welschinger's:
rather than working with the finite degree polynomials ~\eqref{eq:Pn Kostlan def}, as in ~\cite{gayet2014lower}, we first prove the result
for the limit Bargmann-Fock random field on $\R^{d}$ (Theorem \ref{thm:Betti asymp Euclid}), and then deduce the result by a perturbative procedure
following Nazarov-Sodin (Theorem \ref{thm:tot Betti numb loc}).

\vspace{2mm}

It is crucial to determine whether the global asymptotics
\begin{equation*}
\E[\beta_{i;L}] \sim c_{i}\vol(\M)\cdot L^{d},
\end{equation*}
expected from its local probabilistic version \eqref{eq:betti loc prob conv ci}, could be extended
to hold for the total expected Betti number of $f^{-1}$ in some scenario, inclusive of all the motivational examples.
Such a result would indicate that no giant ``percolating" components, not lying inside any {\em macroscopic}
(or slightly bigger) geodesic balls exist, contributing positively to the Betti numbers.
In fact some numerics due to Barnett-Jin (presented within ~\cite{sarnak_wigman16})
support the contrary for $d\ge 3$, as argued by Sarnak ~\cite{Sa}, see
Figure \ref{fig:Barnett 3d giant}, and also ~\cite[\S 2.1]{beliaev2019mean}.
To our best knowledge, at this stage this question is entirely open, save for the results on $\beta_{0;L}$
(and $\beta_{d-1;L}$) due to Nazarov-Sodin.

\section{Proof of Theorem \ref{thm:Betti asymp Euclid}}

\subsection{Auxiliary lemmas}

Recall that $\beta_{i}(R)=b_{i}(\Zc_{F}(R))$ is defined in \eqref{eq:betai def}, and for $x\in \R^{d}$, $R>0$,
introduce
\begin{equation}
\label{eq:bi loc sum def}
\beta_{i}(x;R)=\beta_{F;i}(x,R) := \sum\limits_{\gamma\subseteq \Z_{F}\cap B_{x}(R)} b_{i}(\gamma),
\end{equation}
summation over all nodal components of $F$ contained in the shifted ball $B_{x}(R)$, or, equivalently
\begin{equation*}
\beta_{F;i}(x,R) = \beta_{T_{x}F;i}(R),
\end{equation*}
where $T_{x}$ acts by translation $(T_{x}F)(\cdot)=F(\cdot-x)$.

\begin{lemma}[Integral-Geometric sandwich, lower bound; cf. ~{\cite[Lemma 1]{sodin_lec_notes}}]
\label{lem:Int Geom sand}
For every $0<r<R$ we have the following inequality
\begin{equation}
\label{eq:Int Geom sand}
\frac{1}{\vol B(r)}\int\limits_{B(R-r)}\beta_{i}(x;r)dx   \le \beta_{i}(R).
\end{equation}

\end{lemma}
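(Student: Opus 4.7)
The plan is to interchange the order of integration and summation using Fubini, reducing the inequality to a deterministic volume bound. Specifically, by definition of $\beta_{i}(x;r)$,
\begin{equation*}
\int\limits_{B(R-r)}\beta_{i}(x;r)\,dx = \int\limits_{B(R-r)}\sum\limits_{\gamma\subseteq \Zc_{F}\cap B_{x}(r)}b_{i}(\gamma)\,dx = \sum\limits_{\gamma}b_{i}(\gamma)\cdot \vol\bigl(A_{\gamma}\bigr),
\end{equation*}
where $\gamma$ ranges over the (a.s. at most countably many) nodal components of $F$, and
$$A_{\gamma}:=\{x\in B(R-r):\gamma\subseteq B_{x}(r)\}.$$
Using the non-degeneracy granted by $(\rho 3)$, a.s. nodal components intersecting any compact set are finite in number and form smooth closed hypersurfaces, so issues of measurability and of summing only finitely many nonzero terms at a time are handled routinely.

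Next I would observe two elementary facts about $A_{\gamma}$. First, if $x\in A_{\gamma}$, then $\gamma\subseteq B_{x}(r)\subseteq B(R)$, since $\|x\|\le R-r$; so the only $\gamma$ that contribute to the sum on the right are nodal components lying entirely in $B(R)$, i.e.\ precisely those appearing in $\beta_{i}(R)$. Second, the condition $\gamma\subseteq B_{x}(r)$ says that every point of $\gamma$ lies within distance $r$ of $x$, equivalently $x$ lies within distance $r$ of every point of $\gamma$; hence, for any fixed $y\in \gamma$,
\begin{equation*}
A_{\gamma}\subseteq B_{y}(r),\qquad \text{and therefore} \qquad \vol(A_{\gamma})\le \vol B(r).
\end{equation*}

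Combining these two observations yields
\begin{equation*}
\int\limits_{B(R-r)}\beta_{i}(x;r)\,dx \le \vol B(r)\cdot \sum\limits_{\gamma\subseteq B(R)}b_{i}(\gamma) = \vol B(r)\cdot \beta_{i}(R),
\end{equation*}
and dividing by $\vol B(r)$ gives \eqref{eq:Int Geom sand}. There is no serious obstacle here; the only minor point worth checking is that the integrand $x\mapsto \beta_{i}(x;r)$ is indeed measurable (and finite a.s.), which follows from the fact that on a full-measure event the nodal set in any bounded region has only finitely many components, each with bounded Betti numbers depending continuously (under small translations) on the base point $x$ off a measure-zero bad set. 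Note that this argument uses only the definitions and not axiom $(\rho 1)$ or $(\rho 4)$, so the lemma holds under the standing hypotheses $(\rho 2),(\rho 3)$.
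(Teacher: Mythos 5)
Your proof is correct and follows essentially the same route as the paper: interchange summation and integration, observe that only components contained in $B(R)$ contribute, and bound $\vol\{x: \gamma\subseteq B_{x}(r)\}$ by $\vol B(r)$ (the paper phrases this set as $\bigcap_{y\in\gamma}B_{y}(r)$, you as a subset of a single $B_{y}(r)$, which is the same estimate). Your additional remarks on measurability and on which axioms are actually used are fine but not needed beyond what the paper already does.
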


\begin{proof}
Since if a nodal component of $F$ is contained in $B_{x}(r)$ for some $x\in B(R-r)$, then $\gamma \subseteq B(R)$, we may
invert the order of summation and integration to write:
\begin{equation*}
\begin{split}
\frac{1}{\vol B(r)}\int\limits_{B(R-r)}\beta_{i}(x;r)dx &=
\frac{1}{\vol B(r)}\int\limits_{B(R-r)}\sum\limits_{\gamma\subseteq \Zc(F)}\mathbbm{1}_{\gamma\subseteq B_{x}(r)}\cdot b_{i}(\gamma)dx
\\&=\frac{1}{\vol B(r)}\sum\limits_{\gamma\subseteq \Zc(F)\cap B(R)} b_{i}(\gamma)\cdot \vol\{x\in B(R-r):\: \gamma\subseteq B_{x}(r) \}
\\&\le \sum\limits_{\gamma\subseteq \Zc(F)\cap B(R)} b_{i}(\gamma)=b_{i}(R),
\end{split}
\end{equation*}
since
\begin{equation*}
\{x\in B(R-r):\: \gamma\subseteq B_{x}(r) \} = \bigcap\limits_{y\in \gamma}B_{y}(r)
\end{equation*}
is of volume $\le \vol B(r)$.
\end{proof}

The intuition behind the inequality \eqref{eq:Int Geom sand} is, in essence, the convexity of the involved quantities.
One can also establish the upper bound counterpart of \eqref{eq:Int Geom sand}, whence will need to introduce the
$\beta^{*}_{\cdot}(\cdot;\cdot)$
analogue, where the summation range on the r.h.s. \eqref{eq:bi loc sum def} is extended to nodal components $\gamma$
merely {\em intersecting} $B_{x}(R)$. However, since the contribution of a single nodal component to the total
Betti number is not bounded, and is expected to be {\em huge} for percolating components, we did not find a useful
way to exploit such an upper bound inequality. Instead we are going to seek for a global bound, via Kac-Rice
estimating of a relevant local quantity.

\begin{lemma}[Upper bound]
\label{lem:upper bnd loc}
Let $F$ and $i$ be as in Theorem \ref{thm:Betti asymp Euclid}. Then
\begin{equation}
\label{eq:upper bnd loc}
\limsup\limits_{R\rightarrow\infty} \frac{\E[\beta_{i}(R)]}{R^{d}} < \infty.
\end{equation}
\end{lemma}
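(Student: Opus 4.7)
The plan is to bound the total Betti number of the nodal set inside $B(R)$ by a count of simultaneous zeros of a Gaussian map $\R^{d}\to\R^{d}$, and then apply the Kac--Rice formula to obtain the desired $O(R^{d})$ estimate for the expectation.

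First, by Morse theory (the weak Morse inequality), for any closed $(d-1)$-manifold $\gamma$ and any Morse function $g:\gamma\to\R$, one has
\begin{equation*}
b_{i}(\gamma) \;\le\; \sum_{j=0}^{d-1} b_{j}(\gamma) \;\le\; \#\mathrm{Crit}(g).
\end{equation*}
Pick a direction $\xi\in\Sc^{d-1}$ (say $\xi=e_{1}$) and let $\pi(x):=\langle x,\xi\rangle$. For a nodal component $\gamma\subseteq B(R)$, the critical points of $\pi|_{\gamma}$ are precisely the points $x\in\gamma$ where $\nabla F(x)$ is parallel to $\xi$, equivalently, where $F(x)=0$ and the $d-1$ partial derivatives $\partial_{2}F(x),\ldots,\partial_{d}F(x)$ all vanish. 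By $(\rho 3)$, we may (after rotation, if necessary) choose $\xi$ so that the covariance matrix of $(\partial_{2}F(0),\ldots,\partial_{d}F(0))$ is non-singular; this simultaneously ensures that a.s. $\pi|_{\gamma}$ is Morse on every component $\gamma$ of $\Zc_{F}$ (via a standard Bulinskaya-type argument ruling out degenerate critical points).

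Second, summing the above Morse bound over all nodal components $\gamma\subseteq B(R)$ gives almost surely
\begin{equation*}
\beta_{i}(R) \;\le\; \#\bigl\{x\in B(R):\; F(x)=0,\; \partial_{j}F(x)=0 \text{ for } j=2,\ldots,d\bigr\}.
\end{equation*}
Let $\Phi:\R^{d}\to\R^{d}$ denote the Gaussian map $\Phi:=(F,\partial_{2}F,\ldots,\partial_{d}F)$. The right-hand side is the number of zeros of $\Phi$ inside $B(R)$. The regularity $(\rho 2)$ with $p>6$ guarantees that $F\in C^{3}$ a.s., hence $\Phi\in C^{2}$, which is more than sufficient to invoke the Kac--Rice formula. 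Stationarity then yields
\begin{equation*}
\E\bigl[\#\{\Phi=0\}\cap B(R)\bigr]
= \vol B(R)\cdot p_{\Phi(0)}(0)\cdot \E\bigl[\,|\det D\Phi(0)|\,\big|\,\Phi(0)=0\bigr],
\end{equation*}
where $p_{\Phi(0)}$ is the Gaussian density of $\Phi(0)$ at the origin, finite by the non-degeneracy established above, and the conditional expectation of $|\det D\Phi(0)|$ is finite since the entries of $D\Phi(0)$ are Gaussian with moments controlled by $(\rho 2)$.

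Combining the two steps yields the desired uniform bound
\begin{equation*}
\E[\beta_{i}(R)] \;\le\; C_{F}\cdot\vol B(R),
\end{equation*}
for a constant $C_{F}<\infty$ depending only on $\rho_{F}$, which proves \eqref{eq:upper bnd loc}. The main technical point to verify carefully is the a.s. Morse property of $\pi|_{\gamma}$: one must rule out the event that $\pi|_{\gamma}$ has a degenerate critical point for some component $\gamma$, which is a zero-probability event once the direction $\xi$ is chosen generically with respect to $\rho$ (this is where $(\rho 3)$ is genuinely used).
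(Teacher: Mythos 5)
Your proposal is correct and follows the same two-step strategy as the paper (Morse inequalities to bound $b_{i}(\gamma)$ by a critical point count, then Kac--Rice for the expectation), but with a different auxiliary Morse function: you use a linear height function $\pi_{\xi}(x)=\langle x,\xi\rangle$, whereas the paper uses the squared distance $g_{p}(x)=\|x-p\|^{2}$ to a generic point $p$. Your choice is arguably cleaner on the Kac--Rice side: the map $\Phi=(F,\partial_{2}F,\dots,\partial_{d}F)$ is stationary, so the Kac--Rice density is a \emph{constant} and $\E[\#\{\Phi=0\}\cap B(R)]$ is exactly proportional to $\vol B(R)$; the paper instead gets a density $K_{1}(x)$ depending on $x/\|x\|$, must argue its boundedness by continuity on $\Sc^{d-1}$, and has to excise a neighbourhood of the origin where the radial frame is singular. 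The price is paid on the genericity side. Your parenthetical claim that non-degeneracy of $(\partial_{2}F(0),\dots,\partial_{d}F(0))$ ``simultaneously ensures'' the a.s.\ Morse property is not right as stated: the Morse condition involves second derivatives (the second fundamental form of the nodal hypersurface), and a Bulinskaya-type argument would need non-degeneracy hypotheses on the joint law of $(F,\nabla F,\nabla^{2}F)$ that are not obviously supplied by $(\rho 3)$. The safe route --- which you do gesture at in your closing remark --- is the exact analogue of the paper's argument over $p$: for each sample, Sard's theorem applied to the Gauss map of $F^{-1}(0)$ shows the set of bad directions $\xi\in\Sc^{d-1}$ has measure zero, and Fubini then gives that almost every fixed $\xi$ is a.s.\ good. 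One small caveat there: since $F$ is only assumed stationary, not isotropic, you cannot then rotate to $\xi=e_{1}$ ``without loss of generality'' the way the paper translates $p$ to $0$ by stationarity; this is harmless, as the Kac--Rice computation goes through verbatim for an arbitrary fixed good $\xi$, but the phrase ``after rotation, if necessary'' should be dropped. With that repair your argument is complete and, if anything, slightly more economical than the paper's.
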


\begin{proof}

We use Morse Theory to reduce bounding the expected Betti number $\E[\beta_{i}(R)]$ from above to a {\em local} computation,
performed with the aid of Kac-Rice method, an approach already exploited by Gayet-Welschinger ~\cite{gayet2016betti}.
Let $\gamma\subseteq \R^{d}$ be a compact closed hypersurface, and $g:\R^{d}\rightarrow\R$ a smooth function so that its
restriction $g|_{\gamma}$ to $\gamma$ is a Morse function (i.e. $g|_{\gamma}$ has no degenerate critical points).
Then, as a particular consequence of the Morse inequalities ~\cite[Theorem 5.2 (2) on p. 29]{milnor1963morse}, we have
\begin{equation*}
b_{i}(\gamma) \le \Crit_{i}(g|_{\gamma}),
\end{equation*}
where $\Crit_{i}(g|_{\gamma})$ is the number of critical points of $g|_{\gamma}$ of Morse index $i$.
Under the notation of Theorem \ref{thm:Betti asymp Euclid} it follows that
\begin{equation}
\label{eq:betai<=tot Crit}
\E[\beta_{i}(R)] \le \E[\Crit_{i}(g|_{F^{-1}(0) \cap B(R)})]\le \E[\Crit(g|_{F^{-1}(0) \cap B(R)})],
\end{equation}
the r.h.s. of \eqref{eq:betai<=tot Crit} being the total number of critical points of $g$ restricted to the nodal set of $F$
lying in $B(R)$, a local quantity that could be evaluated with the Kac-Rice method.

\vspace{2mm}

Now we evaluate the r.h.s. of \eqref{eq:betai<=tot Crit}, where we have the freedom to choose the function $g$,
so long as it is a.s. Morse restricted to $F^{-1}(0)$. As a concrete simple case, we nominate the
function $$\R^{d}\ni x=(x_{1},\ldots, x_{d})\mapsto g(x)=\|x\|^{2}=\sum\limits_{j=1}^{d}x_{i}^{2},$$
or, more generally, the family of functions $g_{p}=\|x-p\|^{2}$, $p\in \R^{d}$,
having the burden of proving that for some $p\in\R^{d}$, the restriction $g_{p}|_{F^{-1}(0)}$ of
$g_{p}$ to $F^{-1}(0)$ is Morse a.s. For this particular choice of the family $g_{p}$,
a point $x\in F^{-1}(0)\setminus \{0\}$ is a critical point of $g_{p}$, if and only if $\nabla F(x)$ is collinear to
$x-p$. Normalising $v_{1}:=\frac{x-p}{\|x-p\|}$, this is equivalent to
$\nabla F(x)\perp v_{j}$, $j=2,\ldots d$, where $\{v_{j}\}_{2\le j\le d}$ is any orthonormal basis of $v_{1}^{\perp}$,
and it is possible to make a locally smooth choice for $\{v_{j}\}_{2\le j\le d}$ as a function of $x$ (or, rather
$v_{1}$), since $\Sc^{d-1}$ admits orthogonal frames on a finite partition of the sphere into coordinate patches.

Now, by ~\cite[Lemma 6.3, Lemma 6.5]{milnor1963morse}, a critical point $x\in F^{-1}(0)$, of $g_{p}$ is degenerate,
if and only if $p=x+K^{-1}\cdot v_{1}$, with $K$ one of the (at most $d-1$) principal curvatures of $F^{-1}(0)$ at $x$ in direction $v_{1}$,
and, by Sard's Theorem \cite[Theorem 6.6]{milnor1963morse}, given a sample function $F_{\omega}$, where $\omega\in\Omega$ is a sample point in the underlying sample space $\Omega$, the collection $A_{\omega}\subseteq \R^{d}$ of all ``bad" $p$, so that $g_{p}|_{F^{-1}(0)}$ contains a 
degenerate critical point is of vanishing Lebesgue measure, i.e. 
\begin{equation}
\label{eq:mu(Aomega)=0}
\mu(A_{\omega})=0, 
\end{equation}
a.s. We are aiming at showing that there exists $p\in\R^{d}$
so that a.s. $p\notin A_{\omega}$; in fact, by the above, we will be able to conclude, via Fubini, that $\mu$-almost all $p$ will do
(and then, since, by stationarity of $F$, there is no preference of points in $\R^{d}$, we will be able to carry
out the computations with the simplest possible choice $p=0$, though the computations are not significantly more involved 
with arbitrary $p$).
To this end we introduce the set $$\Ac=\{(p,\omega):\: p\in A_{\omega}\}\subseteq \R^{d}\times \Omega$$ 
on the measurable space $\R^{d}\times \Omega$, equipped with the measure $d\lambda=d\mu(p) d\prob(\omega)$. Since there is no 
measurability issue here, an inversion of the integral
\begin{equation*}
\lambda(\Ac) = \int\limits_{\Ac}d\mu(p) d\prob(\omega) = 0, 
\end{equation*}
by \eqref{eq:mu(Aomega)=0}, yields that for $\mu$-almost all $p\in\R^{d}$, 
\begin{equation}
\label{eq:prob(p bad)=0}
\prob\{p\in A_{\omega}\} = 0.
\end{equation}

\vspace{2mm}

The above \eqref{eq:prob(p bad)=0} yields a point $p\in\R^{d}$, so that $g_{p}|_{F^{-1}(0)}$
is a.s. Morse, and, in particular \eqref{eq:betai<=tot Crit} holds a.s. with $g=g_{p}$; by the stationarity of
$F$, we may assume that $p=0$, and we take $g=g_{0}$.
Next we plan to employ the Kac-Rice method for evaluating the expected number of critical points of $g|_{F^{-1}(0)} $ as on the r.h.s. of 
\eqref{eq:betai<=tot Crit}.
Recall from above that, for this particular choice of $g$,
a point $x\in F^{-1}(0)\setminus \{0\}$ is a critical point of $g$, if and only if $\nabla F(x)$ is collinear to
$v_{1}=v_{1}(x):=\frac{x}{\|x\|}$, or, equivalently,
$\nabla F(x)\perp v_{j}$, $j=2,\ldots d$, where $\{v_{j}\}_{2\le j\le d}$ is any orthonormal basis of $v_{1}^{\perp}$.

Let
\begin{equation}
\label{eq:G(x) def}
G(x)=\left(F(x),\langle v_{2},\nabla F(x)\rangle,\ldots, \langle v_{d},\nabla F(x)\rangle\right)
\end{equation}
be the Gaussian random vector, and $C_{G}(x)$ its $d\times d$ covariance matrix.
That the joint Gaussian distribution of $G(x)$ is non-degenerate, is guaranteed by the axiom $(\rho 3)$, since this axiom yields
~\cite[\S 1.2.1]{sodin_lec_notes} the non-degeneracy of the distribution of $\nabla F(x)$ (and hence of any linear transformation
of $\nabla F(x)$ of full rank),
and $F(x)$ is statistically independent of $\nabla F(x)$.
By the Kac-Rice formula ~\cite[Theorem 6.3]{azais_wschebor}, using the non-degeneracy of the distribution of
$G(x)$ as an input, we conclude that for every $\epsilon>0$
\begin{equation}
\label{eq:exp Crit KR excise}
\E[\Crit(g|_{F^{-1}(0) \cap (B(R)\setminus B(\epsilon))})] = \int\limits_{B(R)\setminus B(\epsilon)} K_{1}(x)dx,
\end{equation}
where for $x\ne 0$, the density is defined as the Gaussian integral
\begin{equation}
\label{eq:K1 density x}
K_{1}(x) = K_{1;F}(x) = \frac{1}{(2\pi)^{d/2}\sqrt{|\det C_{G}(x)|}}\cdot \E[ |\det H_{G}(x) | \big| G(x)=0 ],
\end{equation}
and $H_{G}(\cdot)$ is the Hessian of $G$.
Next we apply the Monotone Convergence theorem on \eqref{eq:exp Crit KR excise} as $\epsilon\rightarrow\infty$, upon bearing
in mind that $x=0$ is not a zero of $F$ a.s., we obtain
\begin{equation}
\label{eq:exp Crit KR}
\E[\Crit(g|_{F^{-1}(0) \cap B(R)})] = \int\limits_{B(R)} K_{1}(x)dx,
\end{equation}
extending the definition of $K_{1}$ at $x=0$ arbitrarily.

\vspace{2mm}

In what follows we are going to show that $K_{1}(\cdot)$ is {\em bounded} on $\R^{d}$,
which, in light of \eqref{eq:exp Crit KR} is sufficient to yield \eqref{eq:upper bnd loc}, via \eqref{eq:betai<=tot Crit}.
To this end we observe that, since $F$ is stationary, the value of $K_{1}$ is defined intrinsically as a function of
$v_{1}\in \Sc^{d-1}$, no matter how $v_{j}$, $j\ge 2$ were determined, as long as they constitute an o.n.b. of $v_{1}^{\perp}$,
i.e. $$K_{1}(x)=K_{1}(x/\|x\|) = K_{1}(v_{1}),$$ despite the fact that the law of $G(x)$ does, in general, depend on the choice of the vectors
$\{v_{j}\}$ , $j\ge 2$.

The upshot is that, since,
given $v_{1}\in \Sc^{d-1}$, one can choose $\{v_{j}\}_{2\le j\le d}$ locally continuously, also determining the law of $G(x)$
in a locally continuous and non-degenerate way as a function of $v_{1}$, meaning that $ |\det C_{G}(\cdot)|>0$.
Hence $K_{1}(\cdot)$ in \eqref{eq:K1 density x} is a continuous function of $v_{1}\in \Sc^{d-1}$, and therefore it
is bounded by a constant depending only on the law of $F$ (though not necessarily defined continuously {\em at} the origin).
As it was readily mentioned,
the boundedness of $K_{1}$ is sufficient to yield the statement \eqref{eq:upper bnd loc} of Lemma \ref{lem:upper bnd loc}.

\end{proof}

The following lemma is a restatement of ~\cite[Proposition 5.2]{sarnak_wigman} for random fields
satisfying $(\rho 4)$, and of ~\cite[Theorem 1.3(i)]{canzani2019topology} for Berry's monochromatic isotropic waves
in higher dimensions, and thereupon its proof will be conveniently omitted here.

\begin{lemma}
\label{lem:lower bound}

Let $F:\R^{d}\rightarrow\R$ be a Gaussian random field,
$\Hc(d-1)$ the collection of all diffeomorphism classes of closed $(d-1)$-manifolds that have an embedding in
$\R^{d}$, and for $H\in\Hc(d-1)$ denote $\nod_{F,H}(R)$ the number of nodal components of $F$, entirely contained
in $B(R)$ and diffeomorphic to $H$. Then if $F$ either satisfies $(\rho 4)$ or it is Berry's monochromatic isotropic waves,
one has:
\begin{equation*}
\liminf\limits_{R\rightarrow\infty}\frac{\E[\nod_{F,H}(R)]}{R^{d}} > 0.
\end{equation*}

\end{lemma}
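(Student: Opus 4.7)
My plan is to implement the \emph{barrier method} originating in Nazarov-Sodin and refined by Sarnak-Wigman to distinguish topological types. First I would pick, using the hypothesis $H\in\Hc(d-1)$, an embedding $\iota:H\hookrightarrow B(r_{0}/2)$ for some fixed $r_{0}>0$, and construct a deterministic smooth function $h:\R^{d}\to\R$ whose zero set contains a component $\gamma_{0}$ diffeomorphic to $H$, lying strictly inside $B(r_{0}/2)$, with $\nabla h\neq 0$ along $h^{-1}(0)$ (for instance a smoothed signed distance function to $\iota(H)$, suitably truncated). A standard Thom-isotopy / transversality argument then produces $\delta=\delta(h)>0$ such that, for every smooth $g$ satisfying $\|g-h\|_{C^{1}(B(r_{0}))}<\delta$, the nodal set $g^{-1}(0)\cap B(r_{0})$ has a component contained in $B(r_{0})$ and diffeomorphic to $H$. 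This is the topological stability of transverse nodal sets under small $C^{1}$-perturbations.

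The second ingredient I would invoke is the support theorem: axiom $(\rho 4)$ forces the reproducing kernel Hilbert space of $F$ to be $C^{1}$-dense on any fixed ball, so every smooth function, and in particular the barrier $h$, lies in the $C^{1}$-support of the law of $F$ (standard; see ~\cite{nazarov_sodin}). Hence the event $\{\|F-h\|_{C^{1}(B(r_{0}))}<\delta\}$ has strictly positive probability $p=p(F,H)>0$, and combining with the stability step yields
\begin{equation*}
\prob\{F^{-1}(0)\cap B(r_{0})\ \text{has a component contained in}\ B(r_{0})\ \text{and diffeomorphic to}\ H\}\ge p.
\end{equation*}

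To promote this local estimate to the global lower bound, I would pack $B(R-r_{0})$ with a maximal disjoint collection of $N\ge c_{d}(R/r_{0})^{d}$ centres $\{x_{i}\}_{i=1}^{N}$ whose $r_{0}$-balls $B_{x_{i}}(r_{0})$ all lie in $B(R)$. By the stationarity of $F$, the translated field $T_{x_{i}}F$ is equidistributed with $F$, so each $B_{x_{i}}(r_{0})$ contains an $H$-component of $F^{-1}(0)$ with probability at least $p$. Since the $B_{x_{i}}(r_{0})$ are pairwise disjoint the corresponding $H$-components are distinct, so linearity of expectation gives $\E[\nod_{F,H}(R)]\ge p\cdot N\ge c_{d}\,p\,(R/r_{0})^{d}$, which delivers the positive $\liminf$ upon dividing by $R^{d}$.

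The hardest part will be the Berry monochromatic case, where $(\rho 4)$ fails: the spectral measure is the uniform measure on the unit sphere in Fourier space, so the $C^{1}$-support of the law of $F$ is only the closure of finite superpositions of unit-frequency plane waves, not all smooth functions. The barrier $h$ must then be constructed \emph{inside} this restricted class, realizing every $H\in\Hc(d-1)$ as a component of the nodal set of a finite sum of plane waves of frequency one. This non-trivial realization problem is settled by Canzani-Sarnak ~\cite{canzani2019topology} via a perturbative refinement of the Enciso-Hartley-Peralta-Salas construction for Helmholtz eigenfunctions; once such Berry-admissible barriers are available, the tiling argument from the previous paragraph applies verbatim.
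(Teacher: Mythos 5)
Your proposal is correct and follows essentially the same route as the paper, which in fact omits the proof entirely and defers to \cite{sarnak_wigman} (Proposition 5.2, for fields satisfying $(\rho 4)$) and to \cite{canzani2019topology} (Theorem 1.3(i), for Berry's monochromatic waves); both of those proofs are exactly the barrier--stability--packing argument you describe. The only minor quibble is attributional: the realization of arbitrary topological types as nodal components of Helmholtz solutions, needed for the Berry case, is due to Enciso and Peralta-Salas, which Canzani--Sarnak then feed into the barrier method.
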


\subsection{Proof of Theorem \ref{thm:Betti asymp Euclid}}

\begin{proof}

First we aim at proving \eqref{eq:exp Betti R^d}, that will allow us to deduce \eqref{eq:conv Betti L1 R^d}, with the help of
\eqref{eq:Int Geom sand}.
Take
\begin{equation}
\label{eq:eta def limsup}
\eta:= \limsup\limits_{R\rightarrow\infty}\frac{\E[\beta_{i}(R)]}{R^{d}}.
\end{equation}
Then, necessarily $\eta<\infty$ is finite, thanks to Lemma \ref{lem:upper bnd loc}. We claim that, in fact, \eqref{eq:eta def limsup},
is a limit, whence it is sufficient to show that
\begin{equation}
\label{eq:liminf = eta}
\liminf\limits_{R\rightarrow\infty}\frac{\E[\beta_{i}(R)]}{R^{d}} \ge \eta.
\end{equation}
To this end we take $\epsilon >0$ to be an arbitrary positive number, and, by the definition of $\eta$ as a $\limsup$,
we may choose $r=r(\epsilon)>0$ so that
\begin{equation}
\label{eq:betai/r^d>eta-eps}
\frac{\E[\beta_{i}(r)]}{r^{d}} > \eta - \epsilon.
\end{equation}

We now take $R>r$, and appeal to the Integral Geometric sandwich \eqref{eq:Int Geom sand}, so that taking an expectation of both sides
of \eqref{eq:Int Geom sand} yields
\begin{equation}
\label{eq:E[bi] convexity}
\E[\beta_{i}(R)] \ge \frac{1}{\vol B(r)}\int\limits_{B(R-r)}\E[\beta_{i}(x;r)]dx = \frac{(R-r)^{d}}{r^{d}}\cdot \E[\beta_{i}(r)],
\end{equation}
by the stationarity of $F$. Substituting \eqref{eq:betai/r^d>eta-eps} into \eqref{eq:E[bi] convexity}, it follows that
\begin{equation*}
\E[\beta_{i}(R)] \ge (R-r)^{d}\cdot (\eta-\epsilon),
\end{equation*}
and hence, dividing by $R^{d}$, and taking $\liminf\limits_{R\rightarrow\infty}$ (note that $r$ is kept fixed), we obtain
\begin{equation*}
\liminf\limits_{R\rightarrow\infty}\frac{\E[\beta_{i}(R)]}{R^{d}} \ge \eta-\epsilon.
\end{equation*}
Since $\epsilon>0$ is arbitrary, this certainly implies \eqref{eq:liminf = eta}, which, as it was mentioned above,
implies that $\eta$ in \eqref{eq:eta def limsup} is a limit, a restatement of \eqref{eq:exp Betti R^d} (with
$c_{i} = \frac{\eta}{V_{d}}$).

\vspace{2mm}

Next, having proved \eqref{eq:exp Betti R^d}, we are going to deduce the convergence in mean \eqref{eq:conv Betti L1 R^d},
this time, assuming the axiom $(\rho 1)$, yielding that the action of the translations $\{T_{x}\}_{x\in\R^{d}}$ is ergodic,
proved independently by Fomin ~\cite{fomin}, Grenander ~\cite{grenander1950stochastic}, and Maruyama
~\cite{maruyama1949harmonic} (see also ~\cite[Theorem 3]{sodin_lec_notes}).
Let $0<r<R$, and denote the random variable
\begin{equation}
\label{eq:Psi i def}
\Psi_{i}(R,r)=\Psi_{i}(F;R,r) :=\frac{1}{\vol B(r)}\int\limits_{B(R-r)}\beta_{i}(x;r)dx,
\end{equation}
so that the Integral Geometric sandwich \eqref{eq:Int Geom sand} reads
\begin{equation}
\label{eq:Psi<=beta}
\Psi_{i}(R,r) \le \beta_{i}(R),
\end{equation}
and the aforementioned ergodic theorem asserts that, for $r$ fixed, as $R\rightarrow\infty$,
\begin{equation*}
\frac{1}{\vol B(R-r)}\Psi_{i}(R,r)  \rightarrow \frac{\E[\beta_{i}(r)]}{\vol B(r)}
\end{equation*}
in mean (and a.s.), so that we may deduce the same for
\begin{equation}
\label{eq:Psi/B(R)->betai/B(r)}
\frac{1}{\vol B(R)}\Psi_{i}(R,r)  \rightarrow \frac{\E[\beta_{i}(r)]}{\vol B(r)},
\end{equation}
in mean.

Now let $\epsilon>0$ be arbitrary, and use \eqref{eq:exp Betti R^d}, now at our disposal,
to choose $r=r(\epsilon)$ sufficiently large (but fixed) so that
\begin{equation}
\label{eq:beta/B(r)-ci<eps/3}
\left|\frac{\E[\beta_{i}(r)]}{\vol B(r)} -  c_{i}\right| < \frac{\epsilon}{3},
\end{equation}
and also,
\begin{equation}
\label{eq:exp(betai) Cauchy}
\left|\frac{\E[\beta_{i}(R)]}{\vol B(R)}-\frac{\E[\beta_{i}(r)]}{\vol B(r)}\right| < \frac{\epsilon}{4},
\end{equation}
for the function $$r\mapsto \frac{\E[\beta_{i}(r)]}{\vol B(r)}$$ being Cauchy as $r\rightarrow\infty$.
Next, use \eqref{eq:Psi/B(R)->betai/B(r)} in order for the inequality
\begin{equation}
\label{eq:exp(Psii-betai)<eps/3}
\E\left[\left|\frac{1}{\vol B(R)}\Psi_{i}(R,r)  - \frac{\E[\beta_{i}(r)]}{\vol B(r)}\right|\right] < \frac{\epsilon}{3},
\end{equation}
to hold, provided that $R$ is sufficiently large (depending on $r$ and $\epsilon$).
Note that, thanks to \eqref{eq:Psi<=beta}, we have
\begin{equation}
\label{eq:Ebetai-EPsii}
\begin{split}
&0\le \E\left[\left| \frac{\beta_{i}(R)}{\vol B(R)}  - \frac{1}{\vol B(R)}\Psi_{i}(R,r)\right| \right]=
\E\left[ \frac{\beta_{i}(R)}{\vol B(R)}  - \frac{1}{\vol B(R)}\Psi_{i}(R,r) \right]
\\&=\frac{\E[\beta_{i}(R)]}{\vol B(R)}  - \frac{1}{\vol B(R)}\E[\Psi_{i}(R,r)] =
\frac{\E[\beta_{i}(R)]}{\vol B(R)}  - \frac{\vol B(R-r)}{\vol B(r) \vol B(R)}\E[\beta_{i}(r)]
\\&= \frac{\E[\beta_{i}(R)]}{\vol B(R)}  - (1+o_{R\rightarrow\infty}(1)) \cdot\frac{\E[\beta_{i}(r)]}{\vol B(r)} < \frac{\epsilon}{3}
\end{split}
\end{equation}
for $R$ sufficiently large, by \eqref{eq:Psi i def}, the stationarity of $F$, and \eqref{eq:exp(betai) Cauchy}.
We consolidate all the above inequalities by using the triangle inequality to write
\begin{equation*}
\begin{split}
&\E\left[ \left|\frac{\beta_{i}(R)}{\vol B(R)}  -c_{i}\right| \right] \le
\E\left[ \frac{\beta_{i}(R)}{\vol B(R)}  - \frac{1}{\vol B(R)} \Psi_{i}(R,r) \right]
\\&+ \E\left[ \left|\frac{1}{\vol B(R)}\Psi_{i}(R,r) - \frac{\E[\beta_{i}(r)]}{\vol B(r)}\right| \right] +
\E\left[ \left|\frac{\E[\beta_{i}(r)]}{\vol B(r)} - c_{i}\right| \right] < \epsilon,
\end{split}
\end{equation*}
by \eqref{eq:beta/B(r)-ci<eps/3}, \eqref{eq:exp(Psii-betai)<eps/3} and \eqref{eq:Ebetai-EPsii}. Since $\epsilon>0$ was
an arbitrary positive number, the mean convergence \eqref{eq:conv Betti L1 R^d} is now established.
Finally, we observe that Theorem \ref{thm:Betti asymp Euclid}\ref{it:rho4=>cNS>0} is a direct consequence of Lemma \ref{lem:lower bound}.
Theorem \ref{thm:Betti asymp Euclid} is now proved.

\end{proof}

\section{Proof of Theorem \ref{thm:tot Betti numb loc}}
\label{sec:proof loc Riem}

Let $x\in\M$ be a point as postulated in Theorem \ref{thm:tot Betti numb loc},
$K_{x}$ the corresponding covariance kernel, and $F_{x}$ the centred Gaussian random field defined by $F_{x}$.
Recall that $f_{x,L}(\cdot)$, defined in \eqref{eq:fx,L scal def} on $\R^{d}$ via the identification $T_{x}(\M)\cong \R^{d}$,
is the scaled version of $f_{L}$,
converging in the limit $L\rightarrow\infty$, to $F_{x}$, with accordance to \eqref{eq:covar scal lim}.
By the manifold structure of $\M$, the exponential map $\exp_{x}:T_{x}\rightarrow\M$ is a diffeomorphism
on a sufficiently small ball $B(r)\subseteq T_{x}$, with $r>0$ independent of $x$.
Hence, for every $R>0$,
the diffeomorphism types in $B(R)\subseteq\R^{d}\cong T_{x}(\M)$ are preserved under the {\em scaled} exponential map
$$\exp_{x;L}:u\mapsto \exp_{x}(u/L),$$ provided that $L$ is sufficiently large. In particular,
if $\gamma\subseteq B(R)$ is a smooth hypersurface, then for every $0\le i\le d-1$
\begin{equation}
\label{eq:bi exp scal preserve}
b_{i}(\gamma)=b_{i}(\exp_{x;L}(\gamma)),
\end{equation}
Further, for $r>0$ sufficiently small $\exp_{x}$ maps $B(r)$ into the geodesic ball $B_{x}(r)$, so that,
for every $R>0$, and $L$ sufficiently large, we have
\begin{equation}
\label{eq:exp map pres dist asymp}
\exp_{x;L}(B(R)) = B_{x}(R/L).
\end{equation}

We can then infer from \eqref{eq:bi exp scal preserve} combined with \eqref{eq:exp map pres dist asymp}, that
\begin{equation}
\label{eq:beta exp scal perturb}
\beta_{f_{x,L};i}(R)= \beta_{i}(f_{L};x,R/L)
\end{equation}
holds for every $R>0$, $L\gg 0$ sufficiently large. We observe that, by the assumption \eqref{eq:covar scal lim} of
Theorem \ref{thm:tot Betti numb loc}, the Gaussian random fields $\{f_{x,L}\}$ converge in law to the Gaussian random field $F_{x}$.
That alone does not ensure that one can compare the sample functions $f_{x,L}$ to the sample functions $F_{x}$,
without {\em coupling} them in a particular way, (i.e. define both on the same
probability space $\Omega$ to satisfy some postulated properties). Luckily, such a convenient coupling
was readily constructed ~\cite[Lemma 4]{sodin_lec_notes}, and we will reuse it for our purposes.

\vspace{2mm}

Our aim is to prove the following result, that, taking into account Theorem \ref{thm:Betti asymp Euclid}
applied on $F_{x}$, and \eqref{eq:beta exp scal perturb}, yields Theorem \ref{thm:tot Betti numb loc} at once.
We will denote $\Omega$ to be the underlying probability space, where all the random variables are going to be defined, and $\prob$
the associated probability measure.

\begin{proposition}
\label{prop:perturb Betti}
Under the assumptions of Theorem \ref{thm:tot Betti numb loc}, there exists a coupling of $F_{x}$ and $\{f_{x,L}\}$
so that for every $R>0$ and $\delta>0$ there exists a number $L_{0}=L_{0}(R,\delta)\in\Lc$ sufficiently big, so that
for all $L>L_{0}$ the following inequality holds outside an event of probability $<\delta$:
\begin{equation}
\label{eq:bi perturb}
\beta_{F_{x};i}(R-1) \le \beta_{f_{x,L};i}(R) \le \beta_{F_{x};i}(R+1).
\end{equation}
\end{proposition}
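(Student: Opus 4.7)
The plan is to follow the Nazarov--Sodin / Sarnak--Wigman stability strategy (the blueprint already invoked in the statement preceding the proposition): first, invoke the coupling of \cite[Lemma 4]{sodin_lec_notes} that promotes the covariance convergence \eqref{eq:covar scal lim}, together with the $C^{3-}$ smoothness and non-degeneracy hypotheses of Theorem \ref{thm:tot Betti numb loc}, into a joint realisation of $F_x$ and $\{f_{x,L}\}$ on a common probability space $\Omega$ along which, for every $R>0$ and $\epsilon>0$,
\begin{equation*}
\prob\bigl\{\|f_{x,L}-F_{x}\|_{C^{1}(\overline{B(R+2)})}>\epsilon\bigr\}\underset{L\to\infty}{\longrightarrow}0.
\end{equation*}
Fix this coupling once and for all. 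The task is then to show that, on a high-probability event for $F_x$ alone, any $C^{1}$-small deterministic perturbation $g$ of $F_x$ has a nodal picture inside $B(R+2)$ that stays topologically identical to that of $F_x$, with the further property that the containments of nodal components in each of the concentric balls $B(R-1)\subset B(R)\subset B(R+1)$ are preserved; applying this with $g=f_{x,L}$ then immediately yields \eqref{eq:bi perturb}.

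Concretely, for parameters $c,\eta>0$ to be chosen, introduce the stable event $\Omega^{\mathrm{stab}}_{R,c,\eta}\subseteq\Omega$ defined by the two requirements (a) $|\nabla F_{x}(u)|\ge c$ at every $u\in F_{x}^{-1}(0)\cap\overline{B(R+2)}$, and (b) every nodal component of $F_{x}$ meeting $\overline{B(R+2)}$ lies at distance $\ge \eta$ from each of the three spheres $\partial B(R-1)$, $\partial B(R)$, $\partial B(R+1)$. Under axioms $(\rho 2)$--$(\rho 3)$ the Gaussian vector $(F_{x}(u),\nabla F_{x}(u))$ is non-degenerate, so a standard Bulinskaya-type argument makes the unstable events $\{F_x\text{ has a critical zero in }\overline{B(R+2)}\}$ and $\{F_{x}^{-1}(0)\text{ is tangent to }\partial B(R')\text{ for some }R'\in\{R-1,R,R+1\}\}$ null sets. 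Hence, for any prescribed $\delta>0$, one can select $c=c(R,\delta)$ and $\eta=\eta(R,\delta)$ small enough that $\prob(\Omega^{\mathrm{stab}}_{R,c,\eta})>1-\delta/2$.

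Next, choose $\epsilon=\epsilon(c,\eta)>0$ small enough that any smooth $g$ with $\|g-F_{x}\|_{C^{1}(\overline{B(R+2)})}<\epsilon$ has its zero set in $B(R+2)$ conjugate to $F_{x}^{-1}(0)\cap B(R+2)$ by an ambient diffeomorphism of $C^{0}$-displacement $<\eta$; this is the standard implicit-function-theorem deformation of the one-parameter family $(1-t)F_x+tg$ along its gradient, whose size is controlled by $c^{-1}$. On $\Omega^{\mathrm{stab}}_{R,c,\eta}$ this diffeomorphism sets up a bijection between the components of $F_{x}^{-1}(0)\cap\overline{B(R+2)}$ and those of $g^{-1}(0)\cap\overline{B(R+2)}$ preserving each individual $b_{i}(\gamma)$, and by clause (b) the bijection sends components of $F_{x}$ contained in $B(R-1)$ into components of $g$ contained in $B(R)$, and sends components of $g$ contained in $B(R)$ into components of $F_{x}$ contained in $B(R+1)$. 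These two inclusions give exactly the lower and upper bounds in \eqref{eq:bi perturb}. Taking $g=f_{x,L}$ and choosing $L_{0}=L_{0}(R,\delta)$ large enough that the coupling estimate above guarantees $\|f_{x,L}-F_{x}\|_{C^{1}(\overline{B(R+2)})}<\epsilon$ outside an event of probability $<\delta/2$ concludes the argument.

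The main obstacle is clause (b) in the construction of $\Omega^{\mathrm{stab}}_{R,c,\eta}$: while the gradient non-vanishing part is classical, ruling out tangencies of $F_{x}^{-1}(0)$ to a prescribed sphere $\partial B(R')$ with probability one requires verifying that the $\R^{2}$-valued Gaussian field $u\mapsto (F_{x}(u),\langle \nabla F_{x}(u),u\rangle)$ restricted to $\partial B(R')$ is non-degenerate, so that its zero set is a.s. empty by Bulinskaya; this uses $(\rho 2)$ for the requisite smoothness and $(\rho 3)$ for the non-degeneracy of $\nabla F_x$. Everything else is bookkeeping that transcribes the component-wise stability argument already used for $\beta_{0}$ in \cite{nazarov_sodin,sodin_lec_notes} to the Betti-number weights $b_{i}(\gamma)$, which, critically, are diffeomorphism invariants and therefore preserved by the ambient isotopy.
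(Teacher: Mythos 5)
Your overall strategy is the one the paper itself follows: couple $F_x$ and $\{f_{x,L}\}$ on one probability space via \cite[Lemma 4]{sodin_lec_notes} so that $\|f_{x,L}-F_x\|_{C^1}$ is small with high probability, restrict to a high-probability ``stable'' event on which the nodal set of $F_x$ is quantitatively non-critical, and then transport nodal components --- together with their diffeomorphism type, hence their individual Betti numbers --- between $F_x^{-1}(0)$ and $f_{x,L}^{-1}(0)$, paying one unit of radius in each direction. This is exactly the combination of Lemma \ref{lem:Deltai small} and Lemma \ref{lem:func perturb comp} used in the paper's proof.

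There is, however, a genuine error in clause (b) of your stable event, and the justification you give for it does not address the real issue. You require that \emph{every} nodal component of $F_x$ meeting $\overline{B}(R+2)$ stay at distance $\ge\eta$ from each of $\partial B(R-1)$, $\partial B(R)$, $\partial B(R+1)$. But a typical sample of $F_x$ has nodal components crossing these spheres transversally: the probability that $F_x^{-1}(0)$ meets, say, $\partial B(R)$ is bounded away from $0$ (and is close to $1$ for moderate $R$), and on that event clause (b) fails for \emph{every} $\eta>0$. Hence $\prob(\Omega^{\mathrm{stab}}_{R,c,\eta})$ does not tend to $1$ as $c,\eta\to 0$, and the Bulinskaya argument you invoke only excludes \emph{tangencies} to the spheres, not transversal crossings, which are the generic obstruction here. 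Fortunately clause (b) is superfluous: the $C^0$-displacement bound $<\eta<1$ on your ambient isotopy already sends any component of $F_x^{-1}(0)$ contained in $B(R-1)$ to a component of $f_{x,L}^{-1}(0)$ contained in $B(R-1+\eta)\subseteq B(R)$, and symmetrically with $R$ and $R+1$ upon exchanging the roles of the two fields; for the same reason you should replace your global ``bijection'' on $\overline{B}(R+2)$ (which is problematic near $\partial B(R+2)$, where components enter and exit) by the two one-sided injections, which is precisely how the paper's Lemma \ref{lem:func perturb comp} is phrased and applied. Deleting clause (b), keeping only the quantitative non-criticality of $F_x$ (the paper's event $\Delta_4$), and using the injection in both directions repairs your argument and makes it coincide with the paper's.
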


In what follows we are going to exhibit a construction of the small exceptional event from ~\cite{sodin_lec_notes},
where \eqref{eq:bi perturb} might not hold, prove by
way of construction that it is of arbitrarily small probability, and finally
culminate, this section with a proof that \eqref{eq:bi perturb} holds outside the exceptional event.

For $R>0$, $L\in \Lc$, $\alpha>0$ we denote the following ``bad" events in $\Omega$:
\begin{equation*}
\Delta_{1}= \Delta_{1}(R,L,\alpha) = \left\{\|f_{x,L}- F_{x}\|_{C^{1}(\overline{B}(2R))} > \alpha\right\},
\end{equation*}
and the ``unstable" event
\begin{equation*}
\Delta_{4} = \Delta_{4}(R,\alpha) = \left\{\min\limits_{y\in \overline{B}(2R)}\max\{ |F_{x}(y)|,|\nabla F_{x}(y) \} < 2\alpha \right\},
\end{equation*}
(with the more technical events $\Delta_{2},\Delta_{3}$ unnecessary for the purposes of this manuscript),
and then set the exceptional event
\begin{equation}
\label{eq:Delta except}
\Delta= \Delta(R,L,\alpha) :=\Delta_{1}\cup \Delta_{4}.
\end{equation}

The following bounds for the bad events are due to Nazarov-Sodin ~\cite{sodin_lec_notes}
(see also ~\cite{sarnak_wigman16,beliaev2018volume}).

\begin{lemma}
\label{lem:Deltai small}
There exists a coupling of $F_{x}$ and $\{f_{x,L}\}$ on $\Omega$, so that the following estimates hold.
\begin{description}

\item[a. ~{\cite[Lemma 4]{sodin_lec_notes}}] For every $R>0$, $\alpha>0$
\begin{equation*}
\limsup\limits_{L\rightarrow\infty} \prob\left(\Delta_{1}(R,L,\alpha)\right) = 0.
\end{equation*}



\item[b. ~{\cite[Lemma 5]{sodin_lec_notes}}] For every $R>0$,
\begin{equation*}
\lim\limits_{\alpha\rightarrow 0}\prob\left(\Delta_{4}(R,\alpha)\right) = 0.
\end{equation*}

\end{description}

\end{lemma}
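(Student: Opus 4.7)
The lemma assembles two estimates from Nazarov--Sodin's lecture notes, so the plan is to reconstruct their two-step scheme rather than derive anything genuinely new. First, for the coupling: thanks to the assumed $C^{3-}$ smoothness and the uniform covariance convergence \eqref{eq:covar scal lim}, the reproducing kernel Hilbert spaces of $f_{x,L}$ and $F_{x}$ (restricted to a compact neighbourhood of $\overline{B}(2R)$) are mutually approximable. Concretely, one builds a common Gaussian probability space $\Omega$ carrying copies of $F_{x}$ and of $\{f_{x,L}\}_{L \in \Lc}$ such that the joint covariance of $F_{x}(y)$ and $f_{x,L}(y)$ equals the common limit covariance plus a vanishing correction; equivalently, one represents both fields as white-noise integrals against kernels whose mutual $L^{2}$ distance, together with that of the derivative kernels of order up to three, tends to $0$ as $L \to \infty$. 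This is the coupling of ~\cite[Lemma 4]{sodin_lec_notes}, which I would adopt essentially verbatim.

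With the coupling in place, the difference $D_{L} := f_{x,L} - F_{x}$ becomes a centred Gaussian field on $\overline{B}(2R)$ whose covariance, together with its partial derivatives of order up to $3-\epsilon$, converges to zero uniformly on $\overline{B}(2R)\times\overline{B}(2R)$ as $L \to \infty$. From here one bounds $\E \|D_{L}\|_{C^{1}(\overline{B}(2R))}^{2} \to 0$ by a standard Dudley--Fernique chaining / entropy estimate applied simultaneously to $D_{L}$ and $\nabla D_{L}$, exploiting the vanishing moduli of continuity supplied by the $C^{3-}$ hypothesis. Markov's inequality then yields $\prob(\Delta_{1}(R,L,\alpha)) \to 0$ for every fixed $\alpha>0$, establishing part~(a).

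Part~(b) is purely a property of the limit field $F_{x}$. Axiom $(\rho 3)$ forces the joint Gaussian vector $(F_{x}(y),\nabla F_{x}(y)) \in \R^{d+1}$ to be non-degenerate with a density uniformly bounded in $y$, so $\prob\{|F_{x}(y)| < 2\alpha \text{ and } \|\nabla F_{x}(y)\| < 2\alpha\} = O(\alpha^{d+1})$ uniformly in $y \in \overline{B}(2R)$. I would then cover $\overline{B}(2R)$ by a grid of $O((R/\alpha)^{d})$ balls of radius $\alpha$ and observe that, on the high-probability event that $\|F_{x}\|_{C^{2}(\overline{B}(2R))}$ is bounded by an absolute constant (finiteness of its moments being granted by $(\rho 2)$), if $\Delta_{4}(R,\alpha)$ occurs at some $y$ in a ball of centre $y_{0}$, then the mean-value theorem forces $\max\{|F_{x}(y_{0})|, \|\nabla F_{x}(y_{0})\|\} \le C\alpha$ at the centre. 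A union bound over the grid yields $\prob(\Delta_{4}(R,\alpha)) \lesssim R^{d}\cdot\alpha \to 0$ as $\alpha \to 0$ with $R$ held fixed.

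The main obstacle is the coupling construction, which must be robust enough to convert covariance convergence into genuine $C^{1}$-convergence of sample paths, not merely convergence in law; this is the only step where the full $C^{3-}$ hypothesis is really used, and once this is settled, the anti-concentration bound in (b) and the union bound are routine Gaussian computations.
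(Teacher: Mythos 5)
Your proposal is correct in substance, but note that the paper itself offers no proof of this lemma: both items are imported verbatim from Nazarov--Sodin (\cite[Lemmas 4 and 5]{sodin_lec_notes}), so there is no in-paper argument to diverge from, and what you have written is a reconstruction of the cited proofs. For part (a) your route is essentially theirs: represent $F_{x}$ and $f_{x,L}$ over a common white noise via square roots of the covariance operators, so that the difference $D_{L}$ is a centred Gaussian field with uniformly vanishing variance, and then upgrade to $C^{1}$-smallness by a chaining/entropy bound. One point you assert rather than derive is that the covariance of $D_{L}$ converges to zero \emph{together with its derivatives}: the hypothesis \eqref{eq:covar scal lim} gives only uniform convergence of the covariances themselves, and the convergence of intermediate derivatives has to be extracted by interpolating against the uniform order-$3$ bounds supplied by the $C^{3-}$ assumption (a Landau--Kolmogorov/Arzel\`{a}--Ascoli step that is part of the Nazarov--Sodin machinery you are invoking); since you explicitly defer the coupling to \cite[Lemma 4]{sodin_lec_notes}, this is acceptable but worth flagging. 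For part (b) you depart from the cited argument in an interesting way: Nazarov--Sodin observe that the events $\Delta_{4}(R,\alpha)$ decrease, as $\alpha\downarrow 0$, to the event that $F_{x}$ has a degenerate zero in $\overline{B}(2R)$, which is null by Bulinskaya's lemma, so the conclusion follows by continuity of the probability measure along monotone limits. Your covering argument --- anti-concentration of the non-degenerate $(d+1)$-dimensional vector $(F_{x}(y_{0}),\nabla F_{x}(y_{0}))$ at $O(\alpha^{d+1})$ per cell, $O((R/\alpha)^{d})$ cells, all conditioned on a $C^{2}$-bound event of probability close to $1$ --- is precisely the quantitative proof of Bulinskaya's lemma, and it buys an explicit rate $\prob(\Delta_{4}(R,\alpha))\lesssim_{R} \alpha + \prob\{\|F_{x}\|_{C^{2}}>C\}$ that the soft argument does not provide; the soft argument in turn needs less (no uniform density bound, no grid). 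Both are valid, and nothing you need later in Proposition \ref{prop:perturb Betti} requires the rate.
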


The following lemma, due to Nazarov-Sodin, shows that if a function has no low lying critical points, then its nodal set is stable
under small perturbations.

\begin{lemma}[~{\cite[Lemmas 6-7]{sodin_lec_notes}}, ~{\cite[Proposition 6.8]{sarnak_wigman16}}]
\label{lem:func perturb comp}
Let $\alpha$, $R>1$, and $f:B(R)\rightarrow\R$ be a $C^{1}$-smooth function on an open ball $B=B(R)\subseteq\R^{d}$ for some $R>0$, such
that for every $y\in B(R)$, either $|f(y)|>\alpha$ or $\|\nabla f(y)|>\alpha$. Let $g\in C^{1}(B)$ such that
$\sup\limits_{y\in B}|f(y)-g(y)|<\alpha$. Then each nodal component $\gamma$ of $f^{-1}(0)$ lying in $B(R-1)$ generates
a nodal component $\gamma'$ of $g$ diffeomorphic to $\gamma$ lying in $B(R)$. Moreover, the map $\gamma\mapsto \gamma'$ between
the nodal components of $f$ lying in $B(R-1)$ and the nodal components of $g$ lying in $B(R)$ is injective.
\end{lemma}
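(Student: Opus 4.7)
\textbf{Proof plan for Proposition \ref{prop:perturb Betti}.}
The overall strategy is to produce a small exceptional event $\Delta$ on which the coupling from Lemma \ref{lem:Deltai small} controls both the $C^{1}$-distance between $F_{x}$ and $f_{x,L}$ and the ``regularity" of $F_{x}$, and then to apply Lemma \ref{lem:func perturb comp} in both directions on the complement of $\Delta$. Without loss of generality we may assume $R>1$, since for $R\le 1$ the lower bound in \eqref{eq:bi perturb} is trivial (the set $B(R-1)$ is empty or degenerate) and the upper bound follows from the argument below with $R$ replaced by any $R'>1$.

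First, given $R,\delta>0$, I would use Lemma \ref{lem:Deltai small}(b) to pick $\alpha=\alpha(R,\delta)>0$ small enough that $\prob(\Delta_{4}(R+1,\alpha))<\delta/2$; then, invoking the coupling of Lemma \ref{lem:Deltai small}(a), I would choose $L_{0}=L_{0}(R,\delta)$ large enough that $\prob(\Delta_{1}(R+1,L,\alpha))<\delta/2$ for all $L>L_{0}$. With $\Delta:=\Delta_{1}(R+1,L,\alpha)\cup \Delta_{4}(R+1,\alpha)$, we have $\prob(\Delta)<\delta$, and outside $\Delta$ two things hold simultaneously on $\overline{B}(2R+2)$: the uniform $C^{1}$-bound $\|f_{x,L}-F_{x}\|_{C^{1}(\overline{B}(2R+2))}\le \alpha$, and the $2\alpha$-regularity of $F_{x}$, i.e.\ at every point $y$ of this ball at least one of $|F_{x}(y)|,|\nabla F_{x}(y)|$ exceeds $2\alpha$. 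The triangle inequality then transfers $\alpha$-regularity to $f_{x,L}$ on the same ball. (A harmless inflation of $\alpha$ at the outset lets us treat the non-strict versus strict inequalities uniformly.)

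Now on $\Delta^{c}$ I would apply Lemma \ref{lem:func perturb comp} twice. For the lower bound, take $f=F_{x}$ and $g=f_{x,L}$ on the open ball $B(R)$: the hypotheses are met by the above, so the lemma produces an injective assignment $\gamma\mapsto \gamma'$ from nodal components $\gamma$ of $F_{x}$ lying in $B(R-1)$ to nodal components $\gamma'$ of $f_{x,L}$ lying in $B(R)$, with $\gamma'$ diffeomorphic to $\gamma$. Since $b_{i}$ is a diffeomorphism invariant, summing $b_{i}(\gamma)=b_{i}(\gamma')$ over this injection yields
\begin{equation*}
\beta_{F_{x};i}(R-1)=\sum_{\gamma\subseteq B(R-1)} b_{i}(\gamma)=\sum_{\gamma} b_{i}(\gamma')\le \sum_{\gamma''\subseteq B(R)}b_{i}(\gamma'')=\beta_{f_{x,L};i}(R).
\end{equation*}
For the upper bound I would swap the roles of $f$ and $g$ and apply the same lemma on $B(R+1)$ with $f=f_{x,L}$ (now $\alpha$-regular on $\overline{B}(R+1)$) and $g=F_{x}$; the resulting injective diffeomorphism-preserving map from nodal components of $f_{x,L}$ in $B(R)$ to nodal components of $F_{x}$ in $B(R+1)$ gives $\beta_{f_{x,L};i}(R)\le \beta_{F_{x};i}(R+1)$ by the same summation.

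The main obstacle is really only bookkeeping: one must define the exceptional event on a ball (here $\overline{B}(2R+2)$) large enough that both applications of Lemma \ref{lem:func perturb comp} — one on $B(R)$, one on $B(R+1)$ — fall inside the region of control, and one must use the $2\alpha$-versus-$\alpha$ gap in the definition of $\Delta_{4}$ to propagate regularity from $F_{x}$ to $f_{x,L}$ via the triangle inequality. Once this accounting is in place the rest of the argument is automatic from Lemmas \ref{lem:Deltai small} and \ref{lem:func perturb comp} and the diffeomorphism-invariance of Betti numbers.
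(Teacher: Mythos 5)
Your proposal does not address the statement it was supposed to prove. The statement is Lemma \ref{lem:func perturb comp} itself: the deterministic assertion that if $f$ has no low-lying critical points on $B(R)$ (at every point either $|f|>\alpha$ or $\|\nabla f\|>\alpha$) and $\sup_{B}|f-g|<\alpha$, then every nodal component $\gamma$ of $f^{-1}(0)$ contained in $B(R-1)$ generates, injectively, a diffeomorphic nodal component $\gamma'$ of $g$ contained in $B(R)$. What you have written is instead a proof of Proposition \ref{prop:perturb Betti}, and at the decisive moment you invoke Lemma \ref{lem:func perturb comp} as a black box (``the lemma produces an injective assignment $\gamma\mapsto\gamma'$\dots''). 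Relative to the assigned statement this is circular: the entire content to be established is assumed as an input. (The paper itself also does not prove this lemma --- it imports it from Nazarov--Sodin and Sarnak--Wigman --- but that does not make your text a proof of it.)

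What an actual proof requires is the collar/stability argument. Since $\|\nabla f\|>\alpha$ on the set $\{|f|\le\alpha\}$, the connected component of $\{|f|<\alpha\}$ containing a given nodal component $\gamma\subseteq B(R-1)$ is a tubular neighbourhood of $\gamma$, diffeomorphic to $\gamma\times(-1,1)$, obtained by flowing along $\nabla f/\|\nabla f\|^{2}$ until $f$ reaches $\pm\alpha$; one checks it stays inside $B(R)$ (this is where the unit margin between $R-1$ and $R$ is used, together with the lower bound on $\|\nabla f\|$). On the two ends of this collar $f$ equals $+\alpha$ and $-\alpha$ respectively, so $|f-g|<\alpha$ forces $g$ to have the corresponding strict signs there; hence $g$ vanishes on each flow line crossing the collar, and the resulting nodal component $\gamma'$ of $g$ is trapped inside the collar. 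One must then argue that $\gamma'$ is diffeomorphic to $\gamma$ (e.g.\ as a separating hypersurface realised over $\gamma$ in the collar coordinates), and injectivity of $\gamma\mapsto\gamma'$ follows because the collars around distinct nodal components of $f$ are pairwise disjoint. None of this appears in your write-up, so as an answer to the stated problem it has a complete gap; the probabilistic bookkeeping you do supply (choice of $\alpha$, $L_{0}$, the event $\Delta$, and the two applications of the lemma with the roles of $f$ and $g$ swapped) is essentially the paper's proof of Proposition \ref{prop:perturb Betti}, but that is a different statement.
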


We are now ready to show a proof of Proposition \ref{prop:perturb Betti}.

\begin{proof}[Proof of Proposition \ref{prop:perturb Betti}]

Let $R>0$ and $\delta>0$ be given. On an application of Lemma \ref{lem:Deltai small}b we obtain a number $\alpha=\alpha(R,\delta)$ so that
\begin{equation*}
\prob(\Delta_{4}(R,\alpha)) < \delta/2,
\end{equation*}
and subsequently, we apply Lemma \ref{lem:Deltai small}a to obtain number $L_{0}=L_{0}(R,\delta,\alpha)$ so that for all $L>L_{0}$,
\begin{equation*}
\prob(\Delta_{1}(R,L,\alpha)) < \delta/2.
\end{equation*}
Defining the exceptional event as in \eqref{eq:Delta except}, the above shows that
\begin{equation*}
\prob(\Delta)<\delta.
\end{equation*}

We now claim that second inequality of \eqref{eq:bi perturb}
is satisfied on $\Omega\setminus\Delta$; by the above this is sufficient yielding the statement of
Proposition \ref{prop:perturb Betti}, and, as it was previously mentioned, also of Theorem \ref{thm:tot Betti numb loc}.
Outside of $\Delta$ we have both
\begin{equation*}
\min\limits_{y\in \overline{B}(2R)}\max\{ |F_{x}(y)|,|\nabla F_{x}(y) \} > 2\alpha
\end{equation*}
and
\begin{equation*}
\|f_{x,L}- F_{x}\|_{C^{1}(\overline{B}(2R))} < \alpha
\end{equation*}
for $L>L_{0}$, and these two also allow us to infer
\begin{equation*}
\min\limits_{y\in \overline{B}(2R)}\max\{ |f_{x,L}(y)|,|\nabla f_{x,L}(y) \} > \alpha
\end{equation*}
for $L>L_{0}$.
The first inequality of \eqref{eq:bi perturb} now follows upon a straightforward application of Lemma \ref{lem:func perturb comp},
with $F_{x}$ and $f_{x,L}$
taking the roles of $f$ and $g$ respectively, whereas the second inequality of \eqref{eq:bi perturb} follows upon reversing the roles of
$f$ and $g$. Proposition \ref{prop:perturb Betti} is now proved.

\end{proof}

\bibliographystyle{plainnat}
\bibliography{BettiNumbers_bib}

\end{document}